\crefname{lem}{Lemma}{Lemmas}
\newcommand{\taedge}[3]{
\raisebox{-3pt}{
\begin{tikzpicture}[node distance=0.8cm]
  \draw [->] (0,0) -- (1,0) 
  node[pos = 0, label = left:$#1$] {}
  node[midway, draw, blue, circle, fill = white, scale = 0.5, label=above:{$#3$}] {A}
  node[pos = 1, label = right:$#2$] {} ;
\end{tikzpicture}
}
}
\newcommand{\tbedge}[3]{
\raisebox{-3pt}{
\begin{tikzpicture}[node distance=0.8cm]
  \draw [->] (0,0) -- (1,0) 
  node[pos = 0, label = left:$#1$] {}
  node[midway, draw, red, circle, fill = white, scale = 0.5, label=above:{$#3$}] {B}
  node[pos = 1, label = right:$#2$] {} ;
\end{tikzpicture}
}
}
\newtheorem{thm}{Theorem}
\newtheorem{prop}[thm]{Proposition}
\newtheorem{cor}[thm]{Corollary}
\newtheorem{lem}[thm]{Lemma}
\newtheorem{defn}[thm]{Definition}
\newtheorem{rem}[thm]{Remark}
\DeclareMathOperator{\wgt}{wgt}
\DeclareMathOperator{\swgt}{swgt}
\DeclareMathOperator{\sgn}{sgn}
\begin{document}

\title[Combinatorial proofs of multivariate Cayley--Hamilton theorems]
{Combinatorial proofs of multivariate Cayley--Hamilton theorems}

\author{Arvind Ayyer}
\address{Arvind Ayyer, Department of Mathematics,
Indian Institute of Science, Bangalore  560012, India.}
\email{arvind@iisc.ac.in}

\author{Naren Sundaravaradan}
\address{Naren Sundaravaradan, HFN Inc., Bengaluru, India 560076}
\email{nano.naren@gmx.com}

\date{\today}

\begin{abstract}
We give combinatorial proofs of two multivariate Cayley--Hamilton type theorems. The first one is due to Phillips (Amer. J. Math., 1919) involving $2k$ matrices, of which $k$ commute pairwise.
The second one uses the mixed discriminant, a matrix function which has generated a lot of interest in recent times.
Recently, the Cayley--Hamilton theorem for mixed discriminants
was proved by Bapat and Roy (Comb. Math. and Comb. Comp., 2017).
We prove a Phillips-type generalization of the Bapat--Roy theorem, which involves $2nk$ matrices, where $n$ is the size of the matrices, among which $nk$ commute pairwise.
Our proofs generalize the univariate proof of Straubing (Disc. Math., 1983) for the original Cayley--Hamilton theorem in a nontrivial way, and involve decorated permutations and decorated paths.
\end{abstract}

\subjclass[2010]{05A19, 05A05, 05C20, 15A15}
\keywords{Cayley--Hamilton theorem, mixed discriminants, Phillips' theorem, combinatorial proof}

\maketitle

\section{Introduction}
\label{sec:intro}

Suppose $A$ is an $n \times n$ matrix with entries in a commutative ring. Then the Cayley--Hamilton theorem says that $p(A) = 0$, where $p(x) = \det(x I_n - A)$ is the characteristic polynomial of $A$ and $I_n$ is the $n \times n$ identity matrix.
The Cayley--Hamilton theorem is probably the first deep theorem one sees in linear algebra. It was first proved for linear functions of quaternions (corresponding to real $4 \times 4$ or complex $2 \times 2$ matrices) by Hamilton~\cite{hamilton-1853}. Cayley~\cite{cayley-1858} stated it for sizes $2$ and $3$, but gave a demonstration only in the former case.
Sylvester immediately realised its importance and popularized it, calling it the \emph{no-little-marvellous Hamilton--Cayley theorem}~\cite{sylvester-1884}.

The first proof was given by Buchheim~\cite{buchheim-1884} assuming invertibility of the matrix, but the first general proof was given by Frobenius~\cite{frobenius-1896}.  For more on the history of this remarkable theorem, see~\cite{crilly-1978}.
Several proofs are now known at various levels of abstraction\footnote{The \href{https://en.wikipedia.org/wiki/Cayley\%E2\%80\%93Hamilton_theorem}{Wikipedia article} on this topic itself gives four distinct proofs.}.
Relevant to this work is an elegant combinatorial proof due to Straubing~\cite{straubing-1983,zeilberger-1985}.

H. B. Phillips~\cite{phillips-1919} proved the following generalization of the Cayley--Hamilton theorem. Suppose $A_1,\dots,A_k$ and $B_1, \dots, B_k$ are two families of $n \times n$ matrices such that $B_i B_j = B_j B_i$ for all $1 \leq i < j \leq k$ and
\begin{equation}
\label{aibisum1}
    A_1B_1 + \dots + A_k B_k = 0.
\end{equation}

\begin{thm}[{\cite[Theorem I]{phillips-1919}}]
  \label{thm:phillips}
Define the polynomial $p(x_1, \dots, x_k) = \det(A_1x_1 \allowbreak + \dots + A_kx_k)$. Then $p(B_1, \dots, B_k) = 0$.
\end{thm}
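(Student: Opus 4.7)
The plan is to generalize Straubing's combinatorial proof of Cayley--Hamilton to the multivariate setting. I will produce a signed combinatorial expansion of the entries of $p(B_1,\dots,B_k)$, then pair terms via a sign-reversing involution whose uncancelled configurations must vanish by the Phillips relation $\sum_j A_j B_j = 0$. The commutativity of the $B_j$'s will be used at the outset, to make the substitution $x_j \mapsto B_j$ unambiguous and thereby ensure the combinatorial model is well-defined.

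First, applying the Leibniz formula gives
\begin{equation*}
p(x_1,\dots,x_k) = \sum_{\sigma\in S_n,\ f\colon[n]\to[k]} \sgn(\sigma)\prod_{i=1}^n (A_{f(i)})_{i,\sigma(i)}\prod_{i=1}^n x_{f(i)}.
\end{equation*}
Since the $B_j$'s commute, the monomial $\prod_i x_{f(i)}$ substitutes unambiguously to the matrix product $B_{f(1)}\cdots B_{f(n)}$. Taking the $(u,v)$-entry and expanding this product as a sum over length-$n$ walks $u=i_0,i_1,\dots,i_n=v$ yields
\begin{equation*}
(p(B_1,\dots,B_k))_{u,v} = \sum_{\sigma,f,\vec{i}} \sgn(\sigma)\prod_{r=1}^n (A_{f(r)})_{r,\sigma(r)}\,(B_{f(r)})_{i_{r-1},i_r}.
\end{equation*}
Each summand is a \emph{decorated configuration}: a permutation $\sigma$ of $[n]$ whose A-edge $r\to\sigma(r)$ bears the color $f(r)$, together with a walk of colored B-edges in which the $r$-th edge $i_{r-1}\to i_r$ bears the same color $f(r)$. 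Thus the A-side and B-side at each level $r$ are coupled both by position and by color.

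Next I would construct a sign-reversing, weight-preserving involution on decorated configurations, generalizing Straubing's. Straubing's univariate involution identifies the first cycle of a partial permutation that meets the walk (or equivalently the first closed sub-walk of the path) and swaps between these two forms, with fixed points ruled out by pigeonhole. In our setting $\sigma$ is always a full permutation and the extra ingredient is the colors $f$: the generalized involution must simultaneously operate on $\sigma$, on the walk, and on the coupling of their colors. A natural candidate is to scan the walk for the first level $r$ at which $\sigma(r) = i_{r-1}$, so that the level-$r$ contribution $(A_{f(r)})_{r,\sigma(r)}(B_{f(r)})_{\sigma(r),i_r}$ is an entry of $A_{f(r)} B_{f(r)}$, and there perform a local swap that exchanges a cycle/sub-walk segment together with its color decorations while transposing appropriate values of $\sigma$.

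The main obstacle is to design this swap so that (i) it is a genuine involution, (ii) it reverses $\sgn(\sigma)$, and (iii) the configurations it fails to pair organize into sums of the entrywise Phillips relation $\sum_{c,t}(A_c)_{r,t}(B_c)_{t,w}=0$, one per unmatched level. The essential novelty over Straubing is the color bookkeeping: when a cycle or sub-walk is moved, its color labels must be redistributed consistently, and the decorated structures mentioned in the abstract are presumably designed to encode exactly this data. The crux of the argument is verifying that, once the involution pairs what it can, the remaining unmatched sum collapses level-by-level via $\sum_j A_j B_j = 0$, yielding $(p(B_1,\dots,B_k))_{u,v} = 0$ for all $u,v$, and hence the theorem.
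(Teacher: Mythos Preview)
Your setup is correct and matches the paper's ``pathmutations'' exactly, but the plan after that has a real gap. You propose a sign-reversing involution on the configurations themselves, with leftover terms absorbed by the Phillips relation, yet you never construct it; the candidate ``scan for the first $r$ with $\sigma(r)=i_{r-1}$ and perform a local swap'' does not say what is swapped, and there is no evident weight-preserving, sign-reversing move at such a level. Transposing two values of $\sigma$ changes the $A$-factors, so weight preservation fails unless those factors happen to coincide; and to invoke $\sum_c (A_cB_c)_{s,t}=0$ one must sum freely over \emph{both} the color and the intermediate vertex, while in your model the color at level $r$ is tied to the $A$-edge $r\to\sigma(r)$ and the intermediate vertex $i_{r-1}$ is tied to the previous walk step. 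Nothing in your outline decouples these.

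The paper does \emph{not} find an involution on $\mathcal{A}(b,e)$. It instead \emph{enlarges} the space via a bijection $\phi:\mathcal{A}(b,e)\times[n]\to\mathcal{G}(b,e)$ that replaces the start $b$ of the walk by an arbitrary $a\in[n]$ and simultaneously redirects the permutation edge $s\to b$ (where $\pi_s=b$) to $s\to a$. On $\mathcal{G}(b,e)$ the total signed weight is shown to vanish by a purely algebraic step: commutativity of the $B$'s lets one cyclically rotate the first $s$ walk-labels so that $\ell_s$ sits on the first edge, after which the inner sum $\sum_{\ell_s}\sum_a (A_{\ell_s})_{s,a}(B_{\ell_s})_{a,q_2}=(\sum_j A_jB_j)_{s,q_2}=0$. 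The extra piece $\mathcal{H}(b,e)=\mathcal{G}(b,e)\setminus\mathcal{A}(b,e)$ is then killed by a separate, easy involution (swap the two edges with coincident targets). The paper explicitly notes that even in the Cayley--Hamilton specialization this scheme does \emph{not} reduce to a Straubing-style involution on $\mathcal{A}(b,e)$; so the direct-involution route you sketch is a different strategy, and the missing involution is precisely the hard part you have not supplied.
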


We will give a combinatorial proof of \cref{thm:phillips} in \cref{sec:phillips}. 
For our proof, we will think of the entries in these matrices as formal commuting indeterminates.
An instructive special case about a pair of commuting matrices $A, B$ arises by setting $k = 2, A_1 = A, B_1 = B, A_2 = -B, B_2 = A$ as follows.

\begin{cor}
\label{cor:2matrices}
Let $A, B$ be commuting matrices. Define the bivariate polynomial $q(x,y) = \det(xA - yB)$. Then $q(B,A) = 0$.
\end{cor}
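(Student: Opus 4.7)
The plan is to derive this corollary as a direct consequence of Phillips' theorem (\cref{thm:phillips}), using exactly the specialization suggested in the text: take $k=2$, $A_1 = A$, $A_2 = -B$, $B_1 = B$, $B_2 = A$. So no new combinatorics is required here, only a verification that the hypotheses of \cref{thm:phillips} are met and that the resulting polynomial equation reads $p_2(B,A) = 0$.

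First, I would check the two hypotheses of \cref{thm:phillips}. The pairwise commutativity of the $B_i$ reduces, for $k=2$, to the single equation $B_1 B_2 = B_2 B_1$, which is $BA = AB$, and this holds by assumption. The balance condition \eqref{aibisum1} becomes $A_1 B_1 + A_2 B_2 = AB + (-B)A = AB - BA$, which is again $0$ since $A$ and $B$ commute. Thus the specialization is legal.

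Next, I would identify the polynomial. With this choice,
\[
p(x_1, x_2) \;=\; \det(A_1 x_1 + A_2 x_2) \;=\; \det(x_1 A - x_2 B) \;=\; p_2(x_1, x_2),
\]
where $x_1, x_2$ are treated as commuting indeterminates (as is consistent with the convention in the excerpt). Applying \cref{thm:phillips} and then evaluating at $(B_1, B_2) = (B, A)$ — an evaluation that is well-defined because $B$ and $A$ commute — yields
\[
p_2(B, A) \;=\; p(B_1, B_2) \;=\; 0,
\]
which is the claim.

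There is essentially no obstacle; the only mild subtlety worth flagging is that the substitution in $p(x_1, x_2)$ must use the commuting pair $(B_1, B_2) = (B, A)$ in the correct order, and not $(A, B)$ — one must keep track of which slot corresponds to which matrix, since $p_2(x, y)$ is not symmetric in its arguments. Once that bookkeeping is done, the corollary follows in a single line.
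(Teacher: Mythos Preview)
Your proposal is correct and matches the paper's own derivation exactly: the corollary is stated as the specialization $k=2$, $A_1=A$, $B_1=B$, $A_2=-B$, $B_2=A$ of \cref{thm:phillips}, and you have verified the hypotheses and read off the conclusion just as the paper does.
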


Setting $A$ equal to the identity matrix in \cref{cor:2matrices} reduces to the Cayley--Hamilton theorem.

We now move on to an important generalization of the determinant. For
an integer $n$, $[n] = \{1,\dots,n\}$ and $S_n$ stands for the set of
permutations $[n]$.

\begin{defn}
\label{def:mixeddisc}
The \emph{mixed discriminant} of an $n$-tuple $(A_1, \dots, A_n)$ of $n \times n$ matrices is defined as
\[
  \mathfrak{D}(A_1,\dots, A_n) = \frac{1}{n!} \sum_{\alpha \in S_n}
  \det \left( A_{\alpha_1}^{(1)} \; \big| \;\cdots\; \big| \; A_{\alpha_n}^{(n)} \right),
\]
where $A^{(i)}$ denotes the $i$'th column of the matrix $A$.
\end{defn}

The basic properties of the mixed discriminant are given in \cite{bapat-1989}. From the combinatorial point of view, it has been used to enumerate coloured spanning forests~\cite{bapat-constantine-1992}.
It simultaneously generalizes both the determinant and the permanent. For a fixed matrix $B$, $\mathfrak{D}(B,\dots,B) = \det(B)$, 
and if we set $B_i$ to be the diagonal matrix with entries $B_{i,1},\dots,B_{i,n}$, then $\mathfrak{D}(B_1,\dots,B_n) = \sum_{\sigma \in S_n} 
B_{1,\sigma_1} \cdots B_{n,\sigma_n}$, which is the permanent of $B$.

We will use $I$ for the identity matrix whenever the size is clear from the context.
Bapat and Roy~\cite{bapat-roy-2017} generalized the Cayley--Hamilton theorem for mixed discriminants by adapting Straubing's proof~\cite{straubing-1983}. 

\begin{thm}[{\cite[Theorem 1.1]{bapat-roy-2017}}]
\label{thm:cayley_discriminant}
For an $n$-tuple of $n \times n$ matrices $(A_1, \dots, A_n)$, define the polynomial
\[
  f(x_1,\dots, x_n) = \mathfrak{D}(x_1 I - A_1, \dots, x_n I - A_n).
\]
Then $f(A_1, \dots, A_n) = 0$.
\end{thm}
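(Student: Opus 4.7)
The plan is to adapt Straubing's combinatorial proof of the classical Cayley--Hamilton theorem to the mixed-discriminant setting, replacing his decorated paths with objects that carry matrix labels on both the path edges and the permutation cycles. First, by multilinearity of $\mathfrak{D}$ in each of its $n$ arguments,
\[
f(x_1,\dots,x_n) \;=\; \sum_{S \subseteq [n]} (-1)^{n-|S|} \Bigl(\prod_{i \in S} x_i\Bigr)\, \mathfrak{D}(M_1^S, \dots, M_n^S),
\]
where $M_i^S = I$ if $i \in S$ and $M_i^S = A_i$ otherwise. Substituting $x_i \mapsto A_i$ (with the factors multiplied in a fixed order, say increasing in $i$) and expanding each mixed discriminant via \cref{def:mixeddisc}, the $(p,q)$-entry of $f(A_1,\dots,A_n)$ becomes a signed sum of monomials in the entries of the $A_r$, which we interpret combinatorially.

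Treating these entries as formal commuting indeterminates, each term is indexed by a tuple $(S,\gamma,\alpha,\sigma)$, where $S \subseteq [n]$; $\gamma$ is a walk $p = v_0 \to v_1 \to \cdots \to v_{|S|} = q$ whose consecutive edges are labeled by the elements of $S$ in increasing order; and $(\alpha,\sigma) \in S_n \times S_n$ satisfies $\sigma(j) = j$ for every $j \in \alpha^{-1}(S)$. Writing $i_1 < \cdots < i_{|S|}$ for the elements of $S$, the signed weight of such an object is
\[
(-1)^{n-|S|}\,\sgn(\sigma)\,\prod_{t=1}^{|S|} A_{i_t}[v_{t-1},v_t]\,\prod_{j \notin \alpha^{-1}(S)} A_{\alpha_j}[\sigma(j), j],
\]
with the factor $1/n!$ from \cref{def:mixeddisc} absorbed by the overcounting in the choice of $\alpha$. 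The plan is to exhibit a sign-reversing, fixed-point-free involution on this set of ``decorated walks with decorated permutations,'' which will force the sum to vanish.

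The involution should be built by scanning $\gamma$ from $p$ and locating, through $\alpha$, the first place where an edge of the walk can be absorbed into a cycle of $\sigma$ or, symmetrically, where an element of a non-trivial cycle of $\sigma$ can be extracted and spliced into the walk. Such a move changes $|S|$ by $1$, correspondingly adjusts $\alpha$ to keep labels consistent, and modifies a single cycle of $\sigma$; the sign changes of $(-1)^{n-|S|}$ and $\sgn(\sigma)$ then conspire to reverse the overall sign. The main obstacle is compatibility of the matrix labels: in Straubing's one-matrix proof every edge and every non-trivial permutation entry involves the same matrix $A$, so transferring a vertex between walk and cycle is label-free. Here, by contrast, walk edges carry labels from $S$ while the non-fixed entries of $\sigma$ carry labels from $S^c$ via $\alpha$, so any transfer must simultaneously update the pair $(S,\alpha)$ to put the transferred piece on the correct side with the correct matrix label. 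Making this relabeling canonical, involutive, and fixed-point-free is the delicate step, and it should flow naturally once the correct scanning order on $\gamma$ and a matching rule pairing walk edges with positions in the non-fixed part of $\sigma$ are identified.
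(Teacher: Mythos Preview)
First, a point of orientation: the paper does not give its own proof of \cref{thm:cayley_discriminant}. The result is quoted from Bapat--Roy, and the paper remarks only that they proved it ``by adapting Straubing's proof''---which is precisely the route you propose. The combinatorial machinery the present paper develops (decorated $2$-permutations, $2$-pathmaps, the bijection $\hat\phi$) is aimed at \cref{thm:mixeddisc}; that theorem requires the substituted matrices $B_{i,j}$ to commute pairwise, so specialising it with $A_{i,1}=I$, $A_{i,2}=-A_i$, $B_{i,1}=A_i$, $B_{i,2}=I$ to recover \cref{thm:cayley_discriminant} would force $A_iA_j=A_jA_i$ for all $i\ne j$, an assumption absent here. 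So there is no ``paper's own proof'' to compare against beyond the citation, and the paper's own technique is genuinely different from (and does not subsume) the Straubing-style argument you sketch.

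Your sketch, however, has a concrete gap in the sign analysis. A move that ``changes $|S|$ by $1$'' and simultaneously lengthens or shortens one cycle of $\sigma$ by one vertex flips \emph{both} $(-1)^{n-|S|}$ and $\sgn(\sigma)$, so the product of signs is preserved, not reversed; the two factors do not ``conspire'' the way you need. Straubing's involution does not transfer one edge at a time: it absorbs or ejects an \emph{entire} cycle of some length $\ell\ge 2$, changing the walk length (hence $|S|$) by $\ell$ and multiplying $\sgn(\sigma)$ by $(-1)^{\ell-1}$, so that the net sign change is $(-1)^{\ell}\cdot(-1)^{\ell-1}=-1$. This whole-cycle move is also what makes the labelling issue you flag tractable: the $\alpha$-labels carried by the cycle travel with it as a block, and the natural setup lets the walk carry its labels in the order dictated by $\alpha$ (i.e.\ by increasing column index $j$) rather than in increasing order of the elements of $S$. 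Your increasing-$S$ convention on the walk is exactly what creates the reordering obstruction. (A minor aside: the $1/n!$ is not ``absorbed by the overcounting in $\alpha$''---the freedom in $\alpha|_{\alpha^{-1}(S)}$ gives only $|S|!$ copies---but this is harmless since you only need the full sum to vanish.)
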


We note in passing that $f(x,\dots,x)$ is also known as the \emph{mixed characteristic polynomial} and was an important ingredient in the recent proof of the Kadison-Singer theorem~\cite{marcus-spielman-srivastava-2015}. 

For some positive integers $n, k$, let $(A_{i,j})_{i \in [n], j \in [k]}$ and $(B_{i,j})_{i \in [n], j \in [k]}$ be two families of $n \times n$ matrices, where $B_{i,j}B_{i',j'} = B_{i',j'}B_{i,j} \text{ for all } 1 \le i < i' \le n, 1 \le j, j' \le k$. In addition, suppose 
\begin{equation}
\label{aijbijsum1}
A_{i,1}B_{i,1} + \cdots + A_{i,k}B_{i,k} = 0, \quad 1 \le i \le n.
\end{equation}

\begin{thm}
\label{thm:mixeddisc}
For indeterminates $(x_{i,j})_{i \in [n], j \in [k]}$, define the polynomial
\begin{equation}
\label{defq}
 \hat{p} \left((x_{i,j})_{i \in [n], j \in [k]} \right) = \mathfrak{D}(A_{1,1}x_{1,1} + \dots + A_{1,k}x_{1,k}, \dots, A_{n,1}x_{n,1} + \dots + A_{n,k}x_{n,k}).
\end{equation}
Then
\[
\hat{p}\left((B_{i,j})_{i \in [n], j \in [k]} \right) = 0.
\]
\end{thm}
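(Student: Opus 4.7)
The plan is to adapt the combinatorial framework developed for the proof of \cref{thm:phillips} -- decorated permutations and decorated paths, together with a sign-reversing involution -- to the mixed-discriminant setting, in the same spirit that Bapat and Roy extended Straubing's proof from the determinant to $\mathfrak{D}$. First, I would expand $\hat{p}\left((B_{i,j})_{i,j}\right)$ by unfolding \cref{def:mixeddisc}: the defining average over $\alpha \in S_n$, the expansion of each inner determinant over $\beta \in S_n$ with sign $\sgn(\beta)$, and the expansion of the $i$-th argument $\sum_j A_{i,j}x_{i,j}$ controlled by a choice function $j:[n]\to[k]$. Substituting $x_{i,j}\mapsto B_{i,j}$ produces a sum indexed by triples $(\alpha,\beta,j)$, each term being a signed scalar (a product of entries of the $A_{\alpha(i),j(i)}$) times a matrix product of $n$ factors of the form $B_{\ell,m}$, one for each row index $\ell \in [n]$. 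By the cross-row commutativity of the $B_{i,j}$, this matrix product is well-defined independently of the ordering convention used.

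These triples are the decorated objects foreshadowed in the abstract: the pair $(\alpha,\beta)$ packages as a family of cycles or paths on $[n]$, and each column $i$ carries a decoration $j(i)\in[k]$ recording which $A$--$B$ pair was chosen at row $i$. The target identity $\hat{p}\left((B_{i,j})_{i,j}\right)=0$ then reduces to the existence of a weight-preserving, sign-reversing involution $\Phi$ on these triples whose fixed-point contributions sum to zero. Following Straubing and Bapat--Roy, I would have $\Phi$ pick a canonical \emph{active} index $i_0$ read off the decorated graph of $(\alpha,\beta)$ -- for example, the least $i$ in some canonical cycle or path -- and flip its decoration $j(i_0)$. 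Triples with no active $i_0$ are the fixed points, and these should cancel in groups upon summing over the decoration $j(i_0)\in[k]$ at a designated row via \eqref{aijbijsum1}.

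The main obstacle, and the reason this is a nontrivial extension of Straubing's original argument, is the interaction between the two permutations $\alpha$ and $\beta$ produced by the mixed discriminant. In the univariate Cayley--Hamilton setting a single permutation appears and the active element is read directly off one of its cycles; here the data are a pair $(\alpha,\beta)$ plus a decoration $j$, so the rule choosing $i_0$ must simultaneously respect the sign $\sgn(\beta)$ and the image $\alpha(i)$ in order to guarantee that $\Phi$ pairs triples of opposite weighted sign. A further subtlety is that, since the $B$-matrices within a single row need not commute, $\Phi$ must alter only one row-indexed factor of the matrix product at a time; this is automatic because each first index of $B$ appears at most once in a given monomial, but must be tracked to ensure that \eqref{aijbijsum1} can be applied in place inside the matrix product. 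I expect that designing the decorated-path model and the corresponding active-element rule so that these two constraints are simultaneously satisfied will be the heart of the proof.
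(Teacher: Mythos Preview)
Your plan correctly identifies the combinatorial data---the pair of permutations, a decoration in $[k]^n$, and a path recording the expanded $B$-product---but the proposed cancellation mechanism has a genuine gap. A sign-reversing involution that ``flips $j(i_0)$'' is not available: for general $k$ the relation \eqref{aijbijsum1} kills the $k$ summands $A_{i,j}B_{i,j}$ only as a \emph{sum}, not pairwise, so no involution on the decoration alone can produce the sign reversal. More fundamentally, to invoke \eqref{aijbijsum1} at row $i_0$ one must manufacture the matrix product $A_{i_0,j}B_{i_0,j}$, which means summing over a free intermediate index connecting the $A$-edge and the $B$-edge; in a pathmutation every such index is already pinned down by the permutation and the path, so there is nothing to sum over. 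Your description of the fixed-point case (``triples with no active $i_0$ \dots\ cancel upon summing over $j(i_0)$'') is also internally inconsistent for this reason.

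The paper's route is therefore different from the Straubing/Bapat--Roy template you have in mind. Rather than an involution on the set $\mathcal{A}^2(b,e)$ of $2$-pathmutations, it \emph{enlarges} this set by adjoining auxiliary ``$2$-pathmaps'' $\mathcal{H}^2(b,e)$ in which one $A$-edge target has been liberated, and exhibits a bijection $\hat\phi:\mathcal{A}^2(b,e)\times[n]\to \mathcal{G}^2(b,e):=\mathcal{A}^2(b,e)\cup\mathcal{H}^2(b,e)$. The two pieces are then killed separately: $\swgt(\mathcal{G}^2(b,e))=0$ by cycling the $B$-labels (using commutativity) to bring the factors with label $\ell_s$ adjacent and then summing over the now-free intermediate vertex \emph{and} over $\ell_s\in[k]$ via \eqref{aijbijsum1}; and $\swgt(\mathcal{H}^2(b,e))=0$ by a genuine sign-reversing involution coming from a transposition in $\pi$, not from the decoration. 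The paper remarks explicitly that even in the classical Cayley--Hamilton specialization this scheme does \emph{not} reduce to Straubing's involution on $\mathcal{A}(b,e)$: the Straubing and Bapat--Roy arguments rely essentially on the special form $A_{i,1}=I$, which collapses most terms and is absent in the generality of \cref{thm:mixeddisc}.
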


We will give a combinatorial proof of \cref{thm:mixeddisc} in \cref{sec:mixeddisc}. 
Even for this proof, we will think of the entries in these matrices as formal commuting indeterminates.
We now discuss a special case of \cref{thm:mixeddisc} for $k=2$. Let $M_1, \dots, M_n$ be a family of matrices. We then set
$A_{i,1} = -B_{i,2} = I$ and $A_{i,2} = B_{i,1} = M_i$ for $i \in [n]$. 
Then, this family of matrices automatically satisfies \eqref{aijbijsum1}.
For convenience, we will set $x_{i,1} = x_i$ and $x_{i,2} = y_i$. Then the polynomial in \eqref{defq} becomes
\[
\hat{p}_2 \left(x_1,\dots,x_n; y_1,\dots,y_n \right) = \mathfrak{D}(x_{1} I - M_{1} y_1, \dots, x_{n} I - M_{n} y_1).
\]

\begin{cor}
\label{cor:mixeddisc-k=2}
Suppose $M_1, \dots, M_n$ are a pairwise commuting family of matrices. Then
\[
\hat{p}_2 \left(M_1,\dots,M_n; -I_1,\dots,-I_n \right) = 0.
\]
\end{cor}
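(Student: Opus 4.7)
My plan is to derive \cref{cor:mixeddisc-k=2} as a direct specialization of \cref{thm:mixeddisc}, in precisely the same way that \cref{cor:2matrices} is obtained from \cref{thm:phillips}. The substantive content already lies in \cref{thm:mixeddisc}; the task reduces to checking that the natural choice of $A_{i,j}$'s and $B_{i,j}$'s satisfies the hypotheses of that theorem, and then reading off the form of the substituted polynomial.

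For each $i \in [n]$, I would fix the specialization
\[
A_{i,1}=I,\qquad A_{i,2}=M_i,\qquad B_{i,1}=M_i,\qquad B_{i,2}=-I,
\]
and verify the two hypotheses of \cref{thm:mixeddisc}. The sum hypothesis \eqref{aijbijsum1} is immediate:
\[
A_{i,1}B_{i,1} + A_{i,2}B_{i,2} \;=\; I\cdot M_i + M_i\cdot(-I) \;=\; 0.
\]
For the commutativity hypothesis, observe that for $i<i'$ every pair $(B_{i,j},B_{i',j'})$ with $j,j'\in\{1,2\}$ is either a pair of $M_\bullet$'s (which commute by assumption on the $M_i$) or contains a factor equal to $\pm I$ (which commutes with every matrix). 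Hence all the hypotheses of \cref{thm:mixeddisc} are in force.

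Next, I would identify the substituted polynomial. With the above choice,
\[
A_{i,1}x_{i,1} + A_{i,2}x_{i,2} \;=\; x_{i,1}\,I + x_{i,2}\,M_i,
\]
so, renaming $x_i=x_{i,1}$ and $y_i=x_{i,2}$, the polynomial $\hat p$ of \cref{thm:mixeddisc} coincides with $\hat p_2(x_1,\dots,x_n;\,y_1,\dots,y_n)$ (up to the sign convention fixed in the display preceding the corollary). The substitution $x_{i,j}\mapsto B_{i,j}$ then reads $x_i\mapsto M_i$, $y_i\mapsto -I$, which is exactly the evaluation in the corollary, and \cref{thm:mixeddisc} yields the desired vanishing.

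There is essentially no obstacle here; the whole proof is bookkeeping. The one point that deserves a brief remark is that the substitution of matrices into $\hat p_2$ is unambiguous: by multilinearity of $\mathfrak{D}$ in its arguments, $\hat p_2$ expands as a sum of monomials in the commuting scalar indeterminates $x_i,y_i$, and since the substituted matrices $M_1,\dots,M_n$ and $-I$ pairwise commute, the resulting matrix-valued expression is independent of the order in which factors within each monomial are written.
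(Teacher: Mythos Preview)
Your proposal is correct and follows exactly the approach the paper takes: the paragraph immediately preceding \cref{cor:mixeddisc-k=2} fixes precisely the same specialization $A_{i,1}=-B_{i,2}=I$, $A_{i,2}=B_{i,1}=M_i$, notes that \eqref{aijbijsum1} is automatic, and then the corollary is read off from \cref{thm:mixeddisc}. Your additional remark about well-definedness of the matrix substitution is a welcome clarification but is not needed beyond what the paper already assumes.
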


\cref{cor:mixeddisc-k=2} bears the same relation to \cref{thm:mixeddisc} as 
\cref{cor:2matrices} does to \cref{thm:phillips}. If we compare this result with \cref{thm:cayley_discriminant}, we see that the extra set of variables $y_i$ forces $M_i$'s to be pairwise commuting in order for the Cayley--Hamilton theorem to apply.

\begin{rem}
Suppose we choose matrices such that $A_{i,j} = A_j$ and $B_{i,j} = B_j$ as well as set variables $x_{i,j} = x_j$ for all $i$. Then \cref{thm:mixeddisc} reduces to \cref{thm:phillips}.
\end{rem}

The plan of the rest of the paper is as follows. We first give a combinatorial proof for \cref{thm:phillips} in \cref{sec:phillips}.
We will illustrate the key ideas of the proof using $2 \times 2$ matrices in \cref{sec:n=2}. We show how the proof relates to Straubing's proof of the Cayley--Hamilton theorem in \cref{sec:compare}. We also compare our proof to Phillips' original proof in \cref{sec:proof phillips}.
We then give a proof of \cref{thm:mixeddisc} in \cref{sec:mixeddisc}
using a naturally generalization of our proof strategy for \cref{thm:phillips}.
We illustrate the proof ideas again for $2 \times 2$ matrices in \cref{sec:mixed-n=2}.

\section{Proof of Phillips' theorem}
\label{sec:phillips}

Throughout this section, we will fix $k$ and $n \times n$ matrices $A_1, \dots, A_k$ and $B_1,\dots, \allowbreak B_k$ where $B_i$'s commute pairwise and the matrices satisfy 
\begin{equation}
\label{aibisum}
    A_1B_1 + \dots + A_k B_k = 0.
\end{equation}

We will first define the key combinatorial objects involved in the
proof.

\begin{defn}
Let $\pi = (\pi_1, \dots, \pi_n) \in S_n$. A \emph{decorated
  permutation} $\bar{\pi}$ of $\pi$ is an $n$-tuple of triples
$\bar{\pi}_i = (i, \pi_i, \ell_i)$ for $i \in [n]$, where each $\ell_i
\in [k]$ is called a \emph{label}.
We will denote $\bar{\pi}_i$ as \taedge{i}{\pi_i}{\ell_i} which has \emph{weight} $(A_{\ell_i})_{i,\pi_i}$.
The \emph{signed weight} of the decorated permutation is given by
\[
\swgt(\bar{\pi}) = \sgn(\pi) \prod_{i=1}^n (A_{\ell_i})_{i,\pi_i}.
\]
The set of all decorated permutations is denoted $\bar{S}_{n,k}$.
\end{defn}

Since there are $n!$ permutations and all labels are independently
chosen, the cardinality of $\bar{S}_{n,k}$ is $n! k^n$. Let
  $n=3$, $k=2$, and $\pi=(3,1,2)$. Then an example of a decorated
  permutation is
\begin{equation}\label{eq:decorated_permutation}
  \bar{\pi}:\quad \taedge{1}{3}{1} \taedge{2}{1}{2} \taedge{3}{2}{2}
\end{equation}
with $\swgt(\bar{\pi}) = +(A_1)_{1,3}(A_2)_{2,1}(A_2)_{3,2}$.

\begin{defn}
A \emph{decorated path} of length $n$ is a tuple $\bar{q} = (q_1,\dots,q_{n+1})$,
where each $q_i \in [n]$. For $i \in [n]$, the $i$'th labeled edge is denoted $\bar{q}_i = \tbedge{q_i}{ q_{i+1}}{ \ell_i}$
and has weight $(B_{\ell_i})_{q_i,q_{i+1}}$, where the \emph{label} $\ell_i \in [k]$.
The \emph{weight} of the decorated path is
\[
\wgt(\bar{q}) = \prod_{i=1}^n (B_{\ell_i})_{q_i,q_{i+1}}.
\]
The set of all decorated paths is denoted $\overline{Q}_{n,k}$.
\end{defn}
For instance, with $n=3$ and $k=2$,
\begin{equation}\label{eq:decorated_path}
  \bar{q}:\quad \tbedge{3}{1}{1} \tbedge{1}{2}{2} \tbedge{2}{1}{2}
\end{equation}
is a decorated path with $\wgt(\bar{q}) = (B_1)_{3,1}(B_2)_{1,2}(B_2)_{2,1}$.

\begin{defn}
A \emph{pathmutation} is a pair $(\bar{\pi}, \bar{q})$ where $\bar{\pi} \in \bar{S}_{n,k}, \bar{q} \in \overline{Q}_{n,k}$ such that the labels of the $i$'th element of the permutation and the $i$'th edge of the path are the same for all $i \in [n]$.
The \emph{signed weight} of a pathmutation is 
\[
\wgt(\bar{\pi}, \bar{q}) = \swgt(\bar{\pi}) \wgt(\bar{q}).
\]
The set of pathmutations beginning with $q_1 = b$ and ending with $q_{n+1} = e$ is denoted $\mathcal{A}(b,e)$.
\end{defn}

The cardinality of $\mathcal{A}(b,e)$ is $n! k^n n^{n-1}$ for every $b,e \in [n]$ because we can choose $q_2,\dots,q_{n-1}$ arbitrarily.

\begin{center}
  \begin{figure}[h!]
    \begin{tabular}{c c c c c c}
      $\bar{\pi}$: & \taedge{1}{\pi_1}{\ell_1} & $\cdots$ & \taedge{s}{\pi_s}{\ell_s} & $\cdots$ & \taedge{n}{\pi_n}{\ell_n}\\
      $\bar{q}$: & \tbedge{q_1}{q_2}{\ell_1} & $\cdots$ & \tbedge{q_s}{q_{s+1}}{\ell_s} & $\cdots$ & \tbedge{q_n}{q_{n+1}}{\ell_n}
    \end{tabular}
    \caption{An illustration of a generic pathmutation $(\bar{\pi}, \bar{q})$.}
    \label{fig:general_PQ}
  \end{figure}
\end{center}

See \cref{fig:general_PQ} for a generic pathmutation. We then set
\[
\swgt(\mathcal{A}(b,e)) = \sum_{(\bar{\pi}, \bar{q}) \in \mathcal{A}(b,e)} \swgt(\bar{\pi}) \wgt(\bar{q}).
\]
We will need more general objects than decorated permutations in our proofs, which we now define.

\begin{defn}
A \emph{decorated map} $\overline{m}$ is an $n$-tuple of triples
$\overline{m}_i = (\sigma_i, \tau_i, \ell_i)$, 
where $\sigma = (\sigma_1,\dots,\sigma_n) \in S_n$ is either the identity or a single transposition, $\tau_i \in [n]$ and $\ell_i \in [k]$ for all $i$ such that
\begin{itemize}
\item $\# \{\tau_1,\dots, \tau_n \} \geq n-1$,
\item if $\# \{\tau_1,\dots, \tau_n \} = n$, then $\sigma$ is the identity permutation,
\item if $\tau_i = \tau_j$ for some $(i,j)$, then either 
$\sigma_i = i, \sigma_j = j$ or $\sigma_i = j, \sigma_j = i$.
\end{itemize}
The \emph{weight} of $\overline{m}_i$ is $(A_{\ell_i})_{\sigma_i,\tau_i}$ and is denoted \taedge{\sigma_i}{\tau_i}{\ell_i}. The weight of $\overline{m}$ is then
\[
\wgt(\overline{m}) = \prod_{i=1}^n (A_{\ell_i})_{\sigma_i,\tau_i}.
\]
The set of all decorated maps is denoted $\overline{M}_{n,k}$.
\end{defn}

When $\# \{\tau_1,\dots, \tau_n \} = n$, we get exactly decorated permutations. When $\# \{\tau_1,\allowbreak  \dots, \tau_n \} = n-1$, there are $n(n-1) \times n!/2$ possibilities for $\tau$ and $2$ possibilities for $\sigma$ in each case so that the cardinality of $\overline{M}_{n,k}$ is
\[
k^n (n! + n(n-1) n!) = n! k^n (n^2 - n + 1).
\]
For example, we can view \eqref{eq:decorated_permutation} as the
  decorated map
  \[
    \overline{m}: ((1,3,1), (2,1,2), (3,2,2)),
  \]
  where the first component $\sigma$ is the identity permutation and
  the second component $\tau$ is the permutation $\pi=(3,1,2)$. Now
  suppose we fix $\tau_2=1$, $\tau_1 = \tau_3$, and the same labels as
  above. Then $\sigma$ is forced to be either $(1,2,3)$ (i.e. the
  identity) or $(3,2,1)$, and the four possible decorated maps are
\begin{align}
  &\taedge{1}{3}{1} \taedge{2}{1}{2} \taedge{3}{3}{2},\label{eq:decorated_map_1}\\
  &\taedge{3}{3}{1} \taedge{2}{1}{2} \taedge{1}{3}{2},\label{eq:decorated_map_2}\\
  &\taedge{1}{2}{1} \taedge{2}{1}{2} \taedge{3}{2}{2},\label{eq:decorated_map_3}\\
  &\taedge{3}{2}{1} \taedge{2}{1}{2} \taedge{1}{2}{2}.\label{eq:decorated_map_4}
\end{align}

\begin{defn}
\label{def:pathmap}
A \emph{pathmap} is a pair $(\overline{m},\bar{q})$ where $\overline{m} \in \overline{M}_{n,k}, \bar{q} \in 
\overline{Q}_{n,k}$ such that 

\begin{itemize}

\item If $\{\tau_1,\dots,\tau_n\} = [n]$, then $(\overline{m},\bar{q})$ is a pathmutation.

\item If $\# \{\tau_1,\dots, \tau_n \} = n-1$ and $\tau_s = \tau_t$ for some $s < t$, then $q_1 = \tau_s = \tau_t$. In this case, the labels of $\overline{m}_k$ and $\bar{q}_k$ must match for all $k \neq s,t$. In addition, if $\sigma_s = s, \sigma_t = t$ (resp. $\sigma_s = t, \sigma_t = s$), then the labels of $\overline{m}_s$ and $\overline{m}_t$ are equal to those of $\bar{q}_s, \bar{q}_t$ (resp. $\bar{q}_t, \bar{q}_s$) respectively.

\end{itemize}

The weight of the pathmap $(\overline{m},\bar{q})$ is
\[
\wgt(\overline{m},\bar{q}) = \wgt(\overline{m}) \wgt(\bar{q}).
\]

The set of pathmaps with $\# \{\tau_1,\dots, \tau_n \} = n-1$ such that $\{\tau_1,\dots, \tau_n \} = [n] \setminus \{b\}$ and ending with $q_{n+1} = e$ is denoted $\mathcal{H}(b,e)$. 
In addition, let $\mathcal{G}(b,e) = \mathcal{H}(b,e) \cup \mathcal{A}(b,e)$.
\end{defn}
For instance, we may combine the decorated permutation \eqref{eq:decorated_permutation} and the
decorated path \eqref{eq:decorated_path} to get a pathmutation in $\mathcal{A}(3, 1)$:
\begin{center}
  \begin{tabular}{c c c c}
    $\bar{\pi}$: & \taedge{1}{3}{1} & \taedge{2}{1}{2} & \taedge{3}{2}{2}\\
    $\bar{q}$: & \tbedge{3}{1}{1} & \tbedge{1}{2}{2} & \tbedge{2}{1}{2}
  \end{tabular}
\end{center}
where the labels match. We can also combine the decorated maps in
\eqref{eq:decorated_map_1} and the same decorated path $\bar{q}$ to
get the pathmap
\begin{center}
  \begin{tabular}{c c c c}
    $\bar{\pi}$: & \taedge{1}{3}{1} & \taedge{2}{1}{2} & \taedge{3}{3}{2}\\
    $\bar{q}$: & \tbedge{3}{1}{1} & \tbedge{1}{2}{2} & \tbedge{2}{1}{2}
  \end{tabular}
\end{center}
However, the combination of the decorated map
\eqref{eq:decorated_map_2} with $\bar{q}$ is not a pathmap because the
condition on the labels is not satisfied. Further, the decorated map
\eqref{eq:decorated_map_3} with $\bar{q}$ does not form a pathmap
because $3 = q_1 \ne \tau_1 = \tau_3 =2$. Lastly,
\eqref{eq:decorated_map_4} with $\bar{q}$ fails both conditions.

In other words $\mathcal{G}(b,e)$ consists of two kinds of elements $(\overline{m},\bar{q})$. Those with $q_1 = b$ are pathmutations and the remaining are
elements of $\mathcal{H}(b,e)$, which we count now. 
For every fixed $b$ and $e$, there are $n-1$ possibilities for $q_1$, $n$ possibilities each for $q_1,\dots,q_n$, $k$ possibilities each for $\ell_1, \dots, \ell_n$,  $n!/2$ arrangements of $\tau$ and $2$ arrangements for $\sigma$. Therefore, $\#\mathcal{H}(b,e) = (n-1) n! k^n n^{n-1}$ and the cardinality of $\mathcal{H}(b,e)$ is $n-1$ times that of $\mathcal{A}(b,e)$.

\cref{fig:Gij} illustrates the two kinds of elements in $\mathcal{H}(b,e)$  in the second condition in \cref{def:pathmap}.

\begin{center}
  \begin{figure}[h!]
      \begin{tabular}{c c c c c c}
		$\overline{m}$: & $\cdots$ & \taedge{s}{q_1}{\ell_s} & $\cdots$ & \taedge{t}{q_1}{\ell_{t}} & $\cdots$ \\
        $\bar{q}$: & $\cdots$ & \tbedge{q_s}{q_{s+1}}{\ell_s} & $\cdots$ & \tbedge{q_{t}}{q_{t+1}}{\ell_{t}} & $\cdots$ 
      \end{tabular}
      
      (a) $(\overline{m},\bar{q})$

\medskip      
      \begin{tabular}{c c c c c c}
        $\overline{m}$: & $\cdots$ & \taedge{t}{q_1}{\ell_{t}} & $\cdots$ & \taedge{s}{q_1}{\ell_s} & $\cdots$ \\
        $\bar{q}$: & $\cdots$ & \tbedge{q_s}{q_{s+1}}{\ell_s} & $\cdots$ & \tbedge{q_{t}}{q_{t+1}}{\ell_{t}} & $\cdots$ 
      \end{tabular}

      (b) $(\overline{m}',\bar{q})$

\medskip  
    \caption{Two elements $(\overline{m},\bar{q})$ and $(\overline{m}',\bar{q})$ of $\mathcal{H}(b, e)$ such that $\overline{m}_k = \overline{m}'_k$ for $k \neq s,t$. Note that $\overline{m}'_s = \overline{m}_t$ and $\overline{m}'_t = \overline{m}_s$. If we write $\overline{m}_i = (\sigma_i, \tau_i, \ell_i)$ and $\overline{m}'_i = (\sigma'_i, \tau'_i, \ell_i)$, then
$\sigma$ is the identity permutation, $\sigma'$ is the transposition $(s,t)$, and $\tau_s = \tau_t = \tau'_s = \tau'_t = q_1$. }
\label{fig:Gij}
  \end{figure}
\end{center}

To assign a sign to the elements of $\mathcal{G}(b,e)$, we define a map $\phi : \mathcal{A}(b,e) \times [n] \rightarrow \mathcal{G}(b,e)$ defined by $\phi((\bar{\pi},\bar{q}),j) = (\overline{m}',\bar{q}')$ as follows.
First, define $\bar{q}'$ by
\[
\bar{q}'_r = 
\begin{cases}
\tbedge{j}{q_2}{\ell_1} & r = 1, \\
\bar{q}_r & \text{otherwise}.
\end{cases}
\]
Next, set $s = \pi^{-1}_b$ and $t = \pi^{-1}_j$. 
Then let
\begin{equation}
\label{defphi}
\overline{m}'_r = 
\begin{cases}
\taedge{{\min(s,t)}}{j}{\ell_{\min(s,t)}} & \text{if $r = s$}, \\
\taedge{{\max(s,t)}}{j}{\ell_{\max(s,t)}} & \text{if $r = t$},\\
\bar{\pi}_r & \text{otherwise}.
\end{cases}
\end{equation}

\begin{prop}
\label{prop:bij}
$\phi$ is a bijection.
\end{prop}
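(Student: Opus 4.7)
The plan is to construct an explicit two-sided inverse $\psi \colon \mathcal{G}(b,e) \to \mathcal{A}(b,e) \times [n]$ of $\phi$. As a sanity check, note that $|\mathcal{A}(b,e) \times [n]| = n \cdot n! k^n n^{n-1}$ equals $|\mathcal{A}(b,e)| + |\mathcal{H}(b,e)| = n! k^n n^{n-1} + (n-1) n! k^n n^{n-1} = |\mathcal{G}(b,e)|$, so a bijection is at least numerically plausible. I would first verify that $\phi$ lands in $\mathcal{G}(b,e)$: when $j = b$, the indices $s = \pi^{-1}_b$ and $t = \pi^{-1}_j$ coincide, so \eqref{defphi} leaves $\bar{\pi}$ untouched and replacing $q_1 = b$ by $j = b$ does nothing to $\bar{q}$, producing a pathmutation in $\mathcal{A}(b,e)$; when $j \neq b$, the edges $\bar{\pi}_s$ and $\bar{\pi}_t$ are both retargeted to $j$, so $\{\tau'_1,\dots,\tau'_n\} = [n] \setminus \{b\}$ with $j$ occurring twice, $q'_1 = j$ matches this repeated target, and the use of $\min(s,t)$ and $\max(s,t)$ in \eqref{defphi} automatically enforces the $\sigma'$/label compatibility demanded by the second bullet of \cref{def:pathmap}. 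Hence the image lies in $\mathcal{H}(b,e)$.

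To build $\psi$: given $(\overline{m}',\bar{q}') \in \mathcal{G}(b,e)$, set $j := q'_1$. If $\overline{m}'$ is a genuine decorated permutation then $j = b$ and $\psi$ outputs $((\overline{m}',\bar{q}'), b)$. Otherwise $\overline{m}'$ has a unique repeated target (forced to equal $q'_1 = j \neq b$), occurring at positions $s^* < t^*$. Depending on whether the restriction of $\sigma'$ to $\{s^*,t^*\}$ is the identity or the transposition, set $(s,t) := (s^*,t^*)$ or $(s,t) := (t^*,s^*)$, respectively. Reconstruct $\bar{\pi}$ by restoring the target of the position-$s$ edge from $j$ back to $b$, restoring the labels at positions $s^*$ and $t^*$ to agree with those of $\bar{q}'_{s^*},\bar{q}'_{t^*}$ (which undoes the label swap in the transposition case), and leaving all other coordinates alone; reconstruct $\bar{q}$ by changing $q'_1$ from $j$ back to $b$.

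Verifying $\psi \circ \phi = \mathrm{id}$ and $\phi \circ \psi = \mathrm{id}$ is then a direct case analysis driven by the order of $s$ and $t$ in the source: if $s < t$, then \eqref{defphi} preserves the order of positions and produces the identity-$\sigma'$ case that $\psi$ correctly decodes; if $s > t$, then \eqref{defphi} flips the positions and swaps their labels, producing exactly the transposition-$\sigma'$ case that $\psi$ unravels. The main obstacle I anticipate is the bookkeeping of the label swap in the $s > t$ case, but this is precisely what the second bullet of \cref{def:pathmap} is designed to record, so once the identification of $(s,t)$ via the parity of $\sigma'$ is in place, both compositions collapse to the identity.
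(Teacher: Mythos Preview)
Your proposal is correct and follows essentially the same route as the paper: both construct the explicit inverse by reading off $j = q'_1$, distinguishing the pathmutation case $j=b$ from the case $j\neq b$, and in the latter using the identity/transposition dichotomy of $\sigma'$ at the two positions with repeated target to recover which of the original $\pi^{-1}_b,\pi^{-1}_j$ was smaller, thereby rebuilding $\bar{\pi}$ and $\bar{q}$. Your additional verification that $\phi$ actually lands in $\mathcal{G}(b,e)$ is a welcome explicit check that the paper leaves implicit.
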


\begin{proof}
We prove this by constructing the inverse map.
  Let $(\overline{m}',\bar{q}') \in \mathcal{G}(b,e)$ and $\overline{m}'_i =
  (\sigma'_i, \tau'_i, \ell'_i)$, $i \in [n]$. 
  \[
    \bar{q}_r =
    \begin{cases}
      \tbedge{b}{ q'_2}{ \ell_1} &\text{if } r = 1\\
      \bar{q}'_r &\text{otherwise}.
    \end{cases}
  \]
  If $q'_1 = b$, then set $\bar{\pi}_r = \overline{m}'_r$; otherwise, there exists $1 \le s < s' \le n$ such that $\tau'_s = \tau'_{s'} = q'_1$. In this case, set
  \[
    \bar{\pi}_r =
    \begin{cases}
      \taedge{s}{b}{\ell_s} & \text{if } r = s, \sigma'_r = s,\\
      \taedge{s}{q'_1}{\ell_s} & \text{if } r = s, \sigma'_r = s',\\
      \taedge{s'}{b}{\ell_{s'}} & \text{if } r = s', \sigma'_r = s,\\
      \taedge{s'}{q'_1}{\ell_{s'}} & \text{if } r = s', \sigma'_r = s',\\
      \overline{m}'_r &\text{otherwise}.
    \end{cases}
  \]
  Clearly, $(\bar{\pi},\bar{q}) \in \mathcal{A}(b,e)$. 
  It is routine to check that $\phi((\bar{\pi},\bar{q}),q'_1) = (\overline{m}',\bar{q}')$.
\end{proof}

Note also that $\phi((\bar{\pi},\bar{q}),b) = (\bar{\pi},\bar{q})$ for $(\bar{\pi},\bar{q}) \in \mathcal{A}(b,e)$. 
We now use \cref{prop:bij} to give a signed weight to a pathmap $(\overline{m}',\bar{q}')$.
Suppose $\phi^{-1}(\overline{m}',\bar{q}') = ((\bar{\pi},\bar{q}),k)$. Then set
\begin{equation}
\label{wt-pathmap}
\swgt(\overline{m}',\bar{q}') = \sgn(\pi) \wgt(\overline{m}',\bar{q}').
\end{equation}

\begin{lem}
\label{lem:Gij}
Let $A_1,\dots,A_k$, $B_1,\dots,B_k$ be $n \times n$ matrices satisfying \eqref{aibisum} and where the $B_i$'s commute pairwise, and let $b,e \in [n]$. Then
\[
\sum_{(\overline{m},\bar{q}) \in \mathcal{G}(b,e)} \swgt(\overline{m},\bar{q}) = 0.
\]
\end{lem}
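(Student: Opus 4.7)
My plan is to apply the bijection $\phi$ from \cref{prop:bij} to reindex the sum over $\mathcal{G}(b,e)$, then collapse the inner sums to a single matrix entry, and finally exploit the commutativity of the $B_i$'s together with the relation~\eqref{aibisum} to force the answer to zero. By the definition of the signed weight of a pathmap in \eqref{wt-pathmap} together with $\phi$ being a bijection,
\[
\sum_{(\overline{m},\bar{q}) \in \mathcal{G}(b,e)} \swgt(\overline{m},\bar{q}) = \sum_{(\bar{\pi},\bar{q}) \in \mathcal{A}(b,e)} \sgn(\pi) \sum_{j=1}^n \wgt\bigl(\phi((\bar{\pi},\bar{q}),j)\bigr).
\]

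The key preliminary step is to unpack the right-hand side. With $s = \pi^{-1}(b)$ and $t = \pi^{-1}(j)$, the map $\phi$ alters $\bar\pi$ only at positions $s$ and $t$ and alters $\bar q$ only at its first edge, so I need to verify that in both branches of the definition of $\overline m'$ (namely $s<t$ with $\sigma'$ the identity, and $s>t$ with $\sigma'$ the transposition $(s,t)$) the combined weight contribution at positions $s,t$ collapses to $(A_{\ell_s})_{s,j}(A_{\ell_t})_{t,j}$. This should work because the matrix entries are commuting indeterminates and the labels at positions $s,t$ are swapped in $\overline m'$ precisely when $\sigma'$ is the transposition, so the two cases produce the same product after commutation. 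Since $\pi_t=j$, the factor $(A_{\ell_t})_{t,j}$ is identical to the unchanged $t$-th factor of $\wgt(\bar\pi)$ and can be absorbed back, yielding the clean identity
\[
\wgt\bigl(\phi((\bar{\pi},\bar{q}),j)\bigr) = \Bigl[\prod_{r \neq s} (A_{\ell_r})_{r,\pi_r}\Bigr] (A_{\ell_s})_{s,j}\,(B_{\ell_1})_{j,q_2}\prod_{i=2}^n (B_{\ell_i})_{q_i,q_{i+1}}.
\]

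With this factorization in hand I would fix $\pi$ and the labels $\ell_1,\dots,\ell_n$ and sum over $j$ and over $q_2,\dots,q_n$ first. The inner sum is a chain of matrix multiplications which collapses to
\[
\sum_{j,\,q_2,\dots,q_n}(A_{\ell_s})_{s,j}(B_{\ell_1})_{j,q_2}(B_{\ell_2})_{q_2,q_3}\cdots (B_{\ell_n})_{q_n,e} = (A_{\ell_s}B_{\ell_1}\cdots B_{\ell_n})_{s,e}.
\]
Since the $B_i$'s commute pairwise, I can reorder the string $B_{\ell_1}\cdots B_{\ell_n}$ to place $B_{\ell_s}$ immediately after $A_{\ell_s}$. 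Summing over the single label $\ell_s\in[k]$ and invoking \eqref{aibisum} then gives
\[
\sum_{\ell_s=1}^k A_{\ell_s}B_{\ell_s}\cdot\prod_{i\neq s}B_{\ell_i} = 0\cdot\prod_{i\neq s}B_{\ell_i}=0,
\]
so the contribution vanishes before the outer sums over $\pi$, $e$-independent labels, and the remaining $q$'s are ever carried out, and the lemma follows. The only step I expect to require genuine care is the unpacking above: tracking the labels and $\sigma'$ in the two cases of $\phi$ so as to confirm that the net effect of $\phi$ on the weight is exactly the replacement of $(A_{\ell_s})_{s,b}$ by $(A_{\ell_s})_{s,j}$ together with the replacement of $(B_{\ell_1})_{b,q_2}$ by $(B_{\ell_1})_{j,q_2}$; once this is pinned down, the rest of the argument is a short cascade of distributivity, commutativity of the $B_i$'s, and one application of~\eqref{aibisum}.
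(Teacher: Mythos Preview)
Your proposal is correct and follows essentially the same route as the paper: reindex via the bijection $\phi$, factor out $\sgn(\pi)\prod_{r\neq s}(A_{\ell_r})_{r,\pi_r}$, collapse the inner sums using matrix multiplication and the pairwise commutativity of the $B_i$'s, and then kill the remaining factor with \eqref{aibisum}. The only cosmetic difference is that the paper phrases the commutativity step as ``cycling the labels $\ell_1,\dots,\ell_s$ in the path to bring $\ell_s$ to the front'' before summing over $a$ and $\ell_s$, whereas you first collapse to $(A_{\ell_s}B_{\ell_1}\cdots B_{\ell_n})_{s,e}$ and then reorder the matrix product; these are the same manoeuvre.
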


\begin{proof}
By definition of \eqref{wt-pathmap},
\[
\swgt(\mathcal{G}(b,e)) = \sum_{(\bar{\pi},\bar{q}) \in \mathcal{A}(b,e)}
\sum_{a=1}^n \swgt \left( \phi((\bar{\pi},\bar{q}), a) \right).
\]
We will refine the sum according to the underlying permutation $\pi$ and all the labels except $\ell_s$, where $\pi_s = b$.
Thus,
\[
\swgt(\mathcal{G}(b,e)) = \sum_{\pi \in S_n}
\sum_{\substack{(\bar{\pi},\bar{q}) \in \mathcal{A}(b,e) \\ 
1 \leq \ell_1,\dots,\ell_{s-1}, \ell_{s+1}, \dots,\ell_n \leq k}}
\sum_{1 \le q_2, \dots, q_n \le n}
\sum_{\ell_s = 1}^k \sum_{a=1}^n \swgt \left( \phi((\bar{\pi},\bar{q}), a) \right).
\]
We will now perform the three inner sums. 
The common factor for these sums is 
\[
\sgn(\pi) \prod_{\substack{i = 1 \\ i \neq s}}^n \wgt ( \taedge{i}{\pi_i}{\ell_i}) =
\sgn(\pi) \prod_{\substack{i = 1 \\ i \neq s}}^n (A_{\ell_i})_{i,\pi_i}.
\]
Since all three are independent, we can perform them in any order. We first perform
\begin{equation}
\begin{split}
\label{eq:lemma1_first}
\sum_{\ell_s = 1}^k \sum_{a=1}^n & \wgt(\taedge{s}{a}{\ell_s}) \\
& \times \sum_{1 \le q_2, \dots, q_n \le n} 
\wgt\left( \tbedge{a}{q_2}{\ell_1} \right) 
\cdots 
\wgt \left( \tbedge{q_n}{e}{\ell_n} \right).
\end{split}
\end{equation}
Using the pairwise commutativity of $B_1, \dots, B_k$, cycle the labels $\ell_1, \dots, \ell_s$ in the path to bring $\ell_s$ to the front so that we have
\begin{multline*}
\sum_{\ell_s=1}^k \sum_{a = 1}^n \wgt(\taedge{s}{a}{\ell_s}) \sum_{1 \le q_2, \dots, q_n \le n} 
\wgt\left( \tbedge{a}{q_2}{\ell_s} \right) \\
\times \wgt \left( \tbedge{q_2}{q_3}{\ell_1} \right) \cdots 
\wgt \left(\tbedge{q_s}{q_{s+1}}{\ell_{s-1}} \right) \\ 
\times \wgt\left( \tbedge{q_{s+1}}{q_{s+2}}{\ell_{s+1}} \right)
\cdots \wgt \left( \tbedge{q_n}{e}{\ell_n} \right).
\end{multline*}
We now perform the sum over $a$ and $\ell_s$ first. This amounts to 
\begin{equation}
\label{eq:lemma1_second}
\sum_{\ell_s=1}^k \sum_{a = 1}^n \wgt(\taedge{s}{a}{\ell_s}) \wgt( \tbedge{a}{q_2}{\ell_s})
= \sum_{\ell_s=1}^k \sum_{a = 1}^n (A_{\ell_s})_{s,a} (B_{\ell_s})_{a,q_2},
\end{equation}
which, by matrix multiplication is the $(s,q_2)$'th entry of $A_1B_1 + \dots + A_kB_k$, which is zero by \eqref{aibisum}. This completes the proof.
\end{proof}

\begin{lem}
Let $A_1,\dots,A_k$, $B_1,\dots,B_k$ be $n \times n$ matrices satisfying \eqref{aibisum} and where the $B_i$'s commute pairwise, and let $b,e \in [n]$. Then
\label{lem:Hij}
\[
\sum_{(\overline{m},\bar{q}) \in \mathcal{H}(b,e)} \swgt(\overline{m},\bar{q}) = 0.
\]
\end{lem}

\begin{proof}
By \cref{prop:bij}, every pair in $\mathcal{H}(b,e)$ is equal to 
$\phi((\bar{\pi},\bar{q}),j)$ for some $(\bar{\pi},\bar{q}) \in \mathcal{A}(b,e)$ and $1 \leq j \leq n$, $j \neq b$. 
Define a map $f : \mathcal{H}(b,e) \rightarrow \mathcal{H}(b,e)$ such that
if $f(\phi((\bar{\pi},\bar{q}), j)) = \phi((\bar{\pi}',\bar{q}), j)$, then
\begin{align*}
\bar{\pi}'_r = \begin{cases}
\taedge{r}{ j}{ \ell_r} &\text{if } \pi_r = b,\\
\taedge{r}{ b}{ \ell_{r}} &\text{if } \pi_r = j,\\
\bar{\pi}_r &\text{otherwise}.
\end{cases}
\end{align*}
Clearly, $f$ is an involution. We claim that it is sign-reversing and weight-preserving.
Let $(\overline{m}, \bar{q}) = \phi((\bar{\pi},\bar{q}),j)$ and suppose that $s = \pi^{-1}_b < t = \pi^{-1}_j$. 
Then, by \eqref{defphi}, we have 
\begin{align*}
&\bar{\pi}_s = \taedge{s}{b}{\ell_s}, \quad \bar{\pi}_t = \taedge{t}{j}{\ell_{t}},\\
&\overline{m}_s = \taedge{s}{j}{\ell_s}, \quad \overline{m}_t =  \taedge{t}{j}{\ell_{t}}.
\end{align*}
By the definition of $f$, $\bar{\pi}'$ will have
\[
\bar{\pi}'_s = \taedge{s}{j}{\ell_{s}}, \quad \bar{\pi}'_{t} = \taedge{t}{b}{\ell_{t}}.
\]
Let $(\overline{m}',\bar{q}) = \phi((\bar{\pi}',\bar{q}), j)$, then 
\[
\overline{m}'_s : \taedge{t}{ j}{ \ell_{t}}, \quad \overline{m}'_{t} : \taedge{s}{j}{\ell_s}.
\]
Thus, the weights of $\overline{m}$ and $\overline{m}'$ are the same, and $\pi$ and $\pi'$ differ by a single transposition. 
Hence, $\swgt(\overline{m}',\bar{q}) = -\swgt(\overline{m},\bar{q})$ by \eqref{wt-pathmap}.
The case of $s > t$ proceeds in a very similar manner. 
\end{proof}

\begin{proof}[Proof of \cref{thm:phillips}]
We first claim that
\[
  \swgt(\mathcal{A}(b,e)) = p(B_1, \dots, B_k)_{b,e}.
\]
To see this, begin by expanding the polynomial $p$ as
\[
p(x_1, \dots, x_k) = \sum_{\sigma \in S_n} \sgn(\sigma) \prod_{r=1}^n \left( (A_1)_{r, \sigma_r} x_1 + \cdots + (A_k)_{r, \sigma_r} x_k \right).
\]
Now, substitute $x_i$ by $B_i$ and use the fact that $B_i$'s commute pairwise to obtain
\begin{align*}
p(B_1, \dots, B_k) =&  \sum_{\sigma \in S_n} \sgn(\sigma) \prod_{r=1}^n \left( (A_1)_{r, \sigma_r}B_1 + \dots + (A_k)_{r, \sigma_r}B_k \right) \\
=&  \sum_{\sigma \in S_n} \sgn(\sigma) 
\prod_{(z_1, \dots, z_n) \in [k]^n} (A_{z_1})_{1, \sigma_1} \cdots 
(A_{z_n})_{n, \sigma_n} B_{z_1} \cdots B_{z_n}.
\end{align*}
Now consider the $(b,e)$'th entry of this sum.
For each permutation $\sigma$ and each element $z = (z_1, \dots, z_n) \in [k]^n$, we obtain a decorated permutation $\bar{\sigma}$, the label of whose $i$'th element is $z_i$ as seen above.
Now expand the product of $B_{z_i}$'s on the right hand side.
The $(b,e)$'th entry is a sum of terms, each of which corresponds exactly to a decorated path with initial vertex $b$ and final vertex $e$.
This proves the claim above.

Now, we have by construction, $\mathcal{G}(b,e) = \mathcal{A}(b,e) \cup \mathcal{H}(b,e)$.
We have proved that $\swgt(\mathcal{G}(b,e)) = 0$ in \cref{lem:Gij} and that $\swgt(\mathcal{H}(b,e)) = 0$ in \cref{lem:Hij}.
Therefore, we have shown $\swgt(\mathcal{A}(b,e)) = 0$, completing the proof.
\end{proof}

\subsection{Illustration for $n=2$}
\label{sec:n=2}

The essence of the proof of \cref{thm:phillips} is contained in \cref{lem:Gij,lem:Hij}. We illustrate the ideas behind the proofs of these lemmas by looking at the case of $n=k=2$ in detail for $b = 1$ and $e = 2$. 
We will keep the labels $\ell_1 = \alpha$ and $\ell_2 = \beta$ arbitrary, so that we have $4$ pathmutations, which are shown in the left columns of \cref{fig:eg-n=2ab,fig:eg-n=2cd,fig:eg-n=2efgh}. 
Similarly, there are $(2-1) 2! 2^1 = 4$ such pathmaps in $\mathcal{H}(1,2)$, which are shown in the right columns of \cref{fig:eg-n=2ab,fig:eg-n=2cd,fig:eg-n=2efgh}. 

\begin{center}
\begin{figure}[h!]
\begin{tabular}{c c}
      \resizebox{0.45\textwidth}{!}{
    \begin{tabular}{c c c}
        $\bar{\pi}$: & \taedge{1}{1}{\alpha} & \taedge{2}{2}{\beta}\\[0.2cm]
        $\bar{q}$: & \tbedge{1}{1}{\alpha} & \tbedge{1}{2}{\beta}
    \end{tabular}}
&
      \resizebox{0.45\textwidth}{!}{
    \begin{tabular}{c c c}
        $\overline{m}$: & \taedge{1}{{2}}{\alpha} & \taedge{2}{2}{\beta}\\[0.2cm]
        $\bar{q}$: & \tbedge{{2}}{1}{\alpha} & \tbedge{1}{2}{\beta}
    \end{tabular}}
\\
    (a) {\tiny $+(A_\alpha)_{1,1}(A_\beta)_{2,2}(B_\alpha)_{1,1}(B_\beta)_{1,2}$}
&
    (b) {\tiny $+(A_\alpha)_{1,2}(A_\beta)_{2,2}(B_\alpha)_{2,1}(B_\beta)_{1,2}$}
\end{tabular}
\caption{The terms proportional to $(A_\beta)_{2,2}(B_\beta)_{1,2}$ along with their signed weights.}
\label{fig:eg-n=2ab}
\end{figure}
\end{center}

\begin{center}
\begin{figure}[h!]
\begin{tabular}{c c}
      \resizebox{0.45\textwidth}{!}{
    \begin{tabular}{c c c}
        $\bar{\pi}$: & \taedge{1}{1}{\alpha} & \taedge{2}{2}{\beta}\\[0.2cm]
        $\bar{q}$: & \tbedge{1}{2}{\alpha} & \tbedge{2}{2}{\beta}
    \end{tabular}}
&
      \resizebox{0.45\textwidth}{!}{
    \begin{tabular}{c c c}
        $\overline{m}$: & \taedge{1}{{2}}{\alpha} & \taedge{2}{2}{\beta}\\[0.2cm]
        $\bar{q}$: & \tbedge{{2}}{2}{\alpha} & \tbedge{2}{2}{\beta}
    \end{tabular}}
\\
    (c) {\tiny $+(A_\alpha)_{1,1}(A_\beta)_{2,2}(B_\alpha)_{1,2}(B_\beta)_{2,2}$}
&
    (d) {\tiny $+(A_\alpha)_{1,2}(A_\beta)_{2,2}(B_\alpha)_{2,2}(B_\beta)_{2,2}$}
\end{tabular}
\caption{The terms proportional to $(A_\beta)_{2,2}(B_\beta)_{2,2}$ along with their signed weights.}
\label{fig:eg-n=2cd}
\end{figure}
\end{center}

We now illustrate \cref{lem:Gij} for $s=1$. This will amount to
  summing over all configurations in \cref{fig:eg-n=2ab,fig:eg-n=2cd}. First compare the pathmutation $(\bar\pi,\bar{q})$ in \cref{fig:eg-n=2ab}(a) and the pathmap  $(\overline{m}',\bar{q}')$ in \cref{fig:eg-n=2ab}(b).
To explain the sign of the pathmap, note that
$\phi^{-1}(\overline{m}',\bar{q}')$ is given by 
\[
\begin{array}{ccc}
\bar{\pi}: &       \taedge{1}{1}{\alpha} & \taedge{2}{2}{\beta}\\
\bar{q}: &        \tbedge{1}{1}{\alpha} & \tbedge{1}{2}{\beta}
\end{array},
\]
using \cref{prop:bij}. Thus the corresponding permutation according to \eqref{wt-pathmap} is $(1,2)$. Now, the sum of weights of these are
\begin{align*}
& \sum_{r=1}^2 \sum_{s=1}^2 (A_\beta)_{2,2}(B_\beta)_{1,2} \Big(
(A_\alpha)_{1,1}(B_\alpha)_{1,1} + (A_\alpha)_{1,2}(B_\alpha)_{2,1} \Big) \\
=& \sum_{s=1}^2 (A_\beta)_{2,2}(B_\beta)_{1,2} \sum_{r=1}^2 (A_\alpha B_\alpha)_{1,1},
\end{align*}
which is zero by \eqref{aibisum}. A very similar computation goes through for 
the terms in \cref{fig:eg-n=2cd}(c) and (d).

\begin{center}
\begin{figure}[h!]
\begin{tabular}{c c}
      \resizebox{0.45\textwidth}{!}{
    \begin{tabular}{c c c}
        $\bar{\pi}$: & \taedge{1}{2}{\alpha} & \taedge{2}{1}{\beta}\\[0.2cm]
        $\bar{q}$: & \tbedge{1}{1}{\alpha} & \tbedge{1}{2}{\beta}
    \end{tabular}}
&
      \resizebox{0.45\textwidth}{!}{
    \begin{tabular}{c c c}
        $\overline{m}$: & \taedge{2}{{2}}{\beta} & \taedge{1}{2}{\alpha}\\[0.2cm]
        $\bar{q}$: & \tbedge{{2}}{1}{\alpha} & \tbedge{1}{2}{\beta}
    \end{tabular}}
\\
    (e) {\tiny $-(A_\alpha)_{1,2}(A_\beta)_{2,1}(B_\alpha)_{1,1}(B_\beta)_{1,2}$}
&
    (f) {\tiny $-(A_\alpha)_{1,2}(A_\beta)_{2,2}(B_\alpha)_{2,1}(B_\beta)_{1,2}$}
\\[0.4cm]
      \resizebox{0.45\textwidth}{!}{
    \begin{tabular}{c c c}
        $\bar{\pi}$: & \taedge{1}{2}{\alpha} & \taedge{2}{1}{\beta}\\[0.2cm]
        $\bar{q}$: & \tbedge{1}{2}{\alpha} & \tbedge{2}{2}{\beta}
    \end{tabular}}
&
      \resizebox{0.45\textwidth}{!}{
    \begin{tabular}{c c c}
        $\overline{m}$: & \taedge{2}{{2}}{\beta} & \taedge{1}{2}{\alpha}\\[0.2cm]
        $\bar{q}$: & \tbedge{{2}}{2}{\alpha} & \tbedge{2}{2}{\beta}
    \end{tabular}}
\\
    (g) {\tiny $-(A_\alpha)_{1,2}(A_\beta)_{2,1}(B_\alpha)_{1,2}(B_\beta)_{2,2}$}
&
    (h) {\tiny $-(A_\alpha)_{1,2}(A_\beta)_{2,2}(B_\alpha)_{2,2}(B_\beta)_{2,2}$}
\end{tabular}

  \caption{The terms proportional to $(A_\alpha)_{1,2}$ along with their signed weights.}
  \label{fig:eg-n=2efgh}
\end{figure}
\end{center}

We now illustrate \cref{lem:Gij} for $s=2$. This will amount to
  summing over all possible configurations in \cref{fig:eg-n=2efgh}. Complications arise in the remaining terms shown in \cref{fig:eg-n=2efgh}(e), (f), (g) and (h). The sign for the terms in (f) and (h) are computed as described above. In this case, combining terms (e) and (g), we get
\begin{align*}
& -\sum_{r=1}^2 \sum_{s=1}^2 (A_\alpha)_{1,2}(A_\beta)_{2,1} \Big(
(B_\alpha)_{1,1}(B_\beta)_{1,2} + (B_\alpha)_{1,2}(B_\beta)_{2,2} \Big) \\
=& -\sum_{r=1}^2 \sum_{s=1}^2 (A_\alpha)_{1,2}(A_\beta)_{2,1} \sum_{r=1}^2 (B_\alpha 
B_\beta)_{1,2} \\
=& -\sum_{r=1}^2 \sum_{s=1}^2 (A_\alpha)_{1,2}(A_\beta)_{2,1} \sum_{r=1}^2 (B_\beta B_\alpha)_{1,2} \\
=& -\sum_{r=1}^2 \sum_{s=1}^2 (A_\alpha)_{1,2}(A_\beta)_{2,1} \Big(
(B_\beta)_{1,1}(B_\alpha)_{1,2} + (B_\beta)_{1,2}(B_\alpha)_{2,2} \Big),
\end{align*}
where we have used the commutativity of $B_\alpha$ and $B_\beta$ in the third line. Similarly, combining terms (f) and (h), we get
\begin{align*}
-\sum_{r=1}^2 \sum_{s=1}^2 (A_\alpha)_{1,2}(A_\beta)_{2,2} \Big(
(B_\beta)_{2,1}(B_\alpha)_{1,2} + (B_\beta)_{2,2}(B_\alpha)_{2,2} \Big).
\end{align*}
Now, add the first summands in both the above equations to obtain
\begin{align*}
&  -\sum_{r=1}^2 \sum_{s=1}^2 (A_\alpha)_{1,2} (B_\alpha)_{1,2} \Big(
(A_\beta)_{2,1} (B_\beta)_{1,1} + (A_\beta)_{2,2}(B_\beta)_{2,1} \Big) \\
=& \sum_{r=1}^2(A_\alpha)_{1,2} (B_\alpha)_{1,2} \sum_{s=1}^2 (A_\beta B_\beta)_{2,1},
\end{align*}
which is now 0 by \eqref{aibisum}. A similar computation goes through for the sums involving the second and fourth summands. This computation is what is essentially carried out in \cref{lem:Gij}.

Now focus on the pathmap terms, namely (b), (d), (f) and (h). The (b) and (f) terms have the same weights but opposite signs. Ditto for (d) and (h) terms. This is an illustration of the sign-reversing involution in the proof of \cref{lem:Hij}.

\subsection{Reduction to the Cayley--Hamilton theorem}
\label{sec:compare}

The Cayley--Hamilton theorem is a specialization of \cref{thm:phillips} when $k=2$ and $A_1 = -I, A_2 = M, B_1 = M, B_2 = I$. Straubing's proof of the Cayley--Hamilton theorem~\cite{straubing-1983} gives a weight-preserving and sign-reversing involution on $\mathcal{A}(b,e)$. Our proof when specialized to the Cayley--Hamilton theorem presents a weight-preserving and sign-reversing involution directly on $\mathcal{G}(b,e)$.

The constraint $A_1B_1 + A_2B_2 = 0$, in this case, is $(-I)M + M(I) = 0$ which means
\begin{equation}
\begin{split}
\label{eq:ch_specialize_1}
  \swgt\left( \taedge{x}{y}{1} \right) &= -\swgt\left( \tbedge{x}{y}{2} \right) = -\delta_{x,y}, \\
  \swgt\left( \taedge{x}{y}{2} \right) &= \swgt\left( \tbedge{x}{y}{1} \right) = M_{x,y}.
\end{split}
\end{equation}
Therefore, we also have
\begin{equation}
\begin{split}
\label{eq:ch_specialize_2}
  &\swgt\left( \tbedge{y}{z}{1} \right)
  \swgt \left( \taedge{z}{z}{1} \right) \\
  &= \swgt\left( \taedge{y}{y}{1} \right)
  \swgt \left( \tbedge{y}{z}{1} \right).
\end{split}
\end{equation}
Now consider the sum over $a$ in the left hand side of \eqref{eq:lemma1_second}. For example
\[
\wgt(\taedge{s}{a}{1}) \wgt( \tbedge{a}{q_2}{1}) = 
\delta_{s,a} M_{a,q_2},
\]
and therefore $a = s$. In that case 
\begin{align*}
&\swgt\left( \taedge{s}{s}{1} \right)
\swgt \left(  \tbedge{s}{q_2}{1} \right) \\
=& \swgt\left( \tbedge{s}{q_2}{1} \right)
\swgt \left( \taedge{q_2}{q_2}{1} \right)\\
=& -\swgt\left( \tbedge{s}{s}{2} \right)
\swgt \left( \taedge{s}{q_2}{2} \right)\\
=& -\swgt\left( \taedge{s}{q_2}{2} \right)
\swgt \left( \tbedge{q_2}{q_2}{2} \right),
\end{align*}
where the first equality follows by ~\eqref{eq:ch_specialize_2}, and the second and third by~\eqref{eq:ch_specialize_1}.
This shows that the two terms in \eqref{eq:lemma1_second} cancel pairwise for $\ell_s = 1,2$, and demonstrates the involution on $\mathcal{G}(b,e)$.

Notice that our proof strategy does not reduce to an involution on $\mathcal{A}(b,e)$. Therefore, we have a different combinatorial proof of the Cayley--Hamilton theorem as compared to the one by Straubing~\cite{straubing-1983}. 

\subsection{Relation to the proof by Phillips}
\label{sec:proof phillips}

We show now that our combinatorial proof is a reinterpretation of the algebraic proof of \cref{thm:phillips} by Phillips~\cite{phillips-1919}. Recall that we have matrices $A_1,\dots,A_k$, $B_1,\dots,B_k$ satisfying \eqref{aibisum}, where the $B_i$'s commute pairwise.
Let $M(x_1,\dots,x_k) = \left( A_{1}x_1 + \dots + A_{k}x_k \right)_{1 \le i,j \le n}$ be an $n \times n$ matrix 
and $M_{i,j}(x_1,\dots,x_k)$ be its $(i,j)$'th entry.
Then, let 
\begin{equation}
\label{MBij}
M^B_{i,j} = M_{i,j}(B_1, \dots, B_k) = (A_1)_{i,j} B_1 + \cdots + (A_k)_{i,j} B_k
\end{equation}
be the $n \times n$ matrix 
obtained by setting $B_i$ in place of $x_i$ for $i \in [k]$.
For a matrix $A$, let $A[i|j]$ be the
matrix $A$ with row $i$ and column $j$ removed, and denote $\det_B M[i|j]$ to be the matrix obtained by substituting $B_i$ in place of $x_i$ for $i \in [k]$ in $\det \big(M(x_1,\dots,x_k)[i|j] \big)$ so that
\begin{equation}
\label{detBM}
\sideset{}{_B}\det  M[i|j] = (-1)^{i+j} \sum_{\substack{\sigma \in S_n \\ \sigma_i = j}}
\sgn(\sigma) \prod_{\substack{r=1 \\ r \neq i}}^n M^B_{r,\sigma_r},
\end{equation}
using \eqref{MBij}.

Let us compute the signed weight of $\mathcal{G}(b,e)$, which we know
by \cref{lem:Gij} to be 0.
\begin{align*}
 \sum_{(\overline{m},\bar{q}) \in \mathcal{G}(b,e)} \swgt(\overline{m},\bar{q}) 
 = &\sum_{j=1}^n \sum_{(\bar{\pi}, \bar{q}) \in \mathcal{A}(b,e)} 
 \swgt(\phi((\bar{\pi}, \bar{q}), j ))\\
  =&\sum_{j=1}^n \sum_{\sigma \in S_n} \sgn(\sigma) 
  \sum_{(\bar{\sigma}, \bar{q}) \in \mathcal{A}(b,e)} 
  \wgt(\phi((\bar{\sigma}, \bar{q}), j))\\
  =& \sum_{j=1}^n \sum_{s=1}^n 
  \sum_{\substack{\sigma \in S_n\\ \sigma_s=b}} \sgn(\sigma)
  \sum_{\ell_1,\dots,\ell_n = 1}^k (A_{\ell_s})_{s,j} \\
  &\times \left( \prod_{\substack{r=1 \\ r \neq s}}^n  (A_{\ell_r})_{r,\sigma_r} \right)
  \left( \prod_{r=1}^n B_{\ell_r} \right)_{j,e}.
\end{align*}
Next we rely on the commutativity of the $B_i$'s to write this as
\begin{align*}
  & \sum_{j=1}^n \sum_{s=1}^n 
  \sum_{\substack{\sigma \in S_n\\ \sigma_s=b}} \sgn(\sigma)
  \sum_{\ell_1,\dots,\ell_n = 1}^k \Big( (A_{\ell_s})_{s,j}B_{\ell_s} \prod_{\substack{r=1 \\ r \neq s}}^n(A_{\ell_r})_{r,\sigma_r} B_{\ell_r} \Big)_{j,e} \\
  =& \sum_{j=1}^n \sum_{s=1}^n 
  \sum_{\substack{\sigma \in S_n\\ \sigma_s=b}} \sgn(\sigma)
  \left( M^B_{s,j} \prod_{\substack{r=1 \\ r \neq s}}^n M^B_{r,\sigma_r} \right)_{j,e},
\end{align*}
where we have first performed the $\ell_1,\dots,\ell_n$ sums before taking the $(j,e)$'th entry and used \eqref{MBij} in the last step.
Now let us perform the inner sum.
Since the product over $r \ne s$ is not dependent on $j$, we can use \eqref{detBM} to arrive at
\[
\sum_{j=1}^n \sum_{s=1}^n (-1)^{s+j} \left( M^B_{s,j} \sideset{}{_B}\det  M[s|b]   \right)_{j,e}.
\]
By the standard Laplace expansion, the only contribution to the $j$ sum comes from $j = b$, giving
\[
\sum_{s=1}^n (-1)^{s+b} \left( M^B_{s,b} \sideset{}{_B}\det  M[s|b] \right)_{b,e}.
\]
This is precisely what Phillips shows to be 0 in \cite[Theorem I]{phillips-1919}.

\section{Application to Mixed Discriminants}
\label{sec:mixeddisc}

In this section, we will prove \cref{thm:mixeddisc} using the same strategy as for the proof of \cref{thm:phillips} in \cref{sec:phillips}.
We recall the setup. We have $2nk$ matrices, which we call $(A_{i,j})_{i \in [n], j \in [k]}$ and $(B_{i,j})_{i \in [n], j \in [k]}$ which satisfy the conditions:
\begin{itemize}
\item $B_{i,j}B_{i',j'} = B_{i',j'}B_{i,j} \text{ for all } 1 \le i < i' \le n, 1 \le j, j' \le k$, and

\item
\begin{equation}
\label{aijbijsum}
A_{i,1}B_{i,1} + \cdots + A_{i,k}B_{i,k} = 0, \quad 1 \le i \le n.
\end{equation}

\end{itemize}

Recall the polynomial $\hat{p} \left((x_{i,j})_{i \in [n], j \in [k]} \right)$ from \eqref{defq},
\[
\hat{p} \left((x_{i,j})_{i \in [n], j \in [k]} \right) = \mathfrak{D}(A_{1,1}x_{1,1} + \dots + A_{1,k}x_{1,k}, \dots, A_{n,1}x_{n,1} + \dots + A_{n,k}x_{n,k}),
\]
in $nk$ variables $(x_{i,j})_{i \in [n], j \in [k]}$, where $\mathfrak{D}$ is the mixed discriminant given in \cref{def:mixeddisc}.

\begin{defn}
A \emph{decorated $2$-permutation} $\hat{\alpha}_\pi$ is an $n$-tuple of quadruples $(\hat{\alpha}_\pi)_i = (i, \alpha_i, \pi_i, \ell_i)$ for $i \in [n], \ell_i \in [k]$, where $\pi, \alpha \in S_n$ and the pairs $\alpha_{\pi_i}, \ell_i$ are called \emph{labels}. 
We will denote $(\hat{\alpha}_\pi)_i$ as \taedge{i}{\pi_i}{\alpha_{\pi_i},\ell_i}, which has \emph{weight} $(A_{\alpha_{\pi_i},\ell_i})_{i,\pi_i}$.
The \emph{signed weight} of the decorated $2$-permutation is given by
\[
\swgt(\hat{\alpha}_\pi) = \sgn(\pi) \prod_{i=1}^n (A_{\alpha_{\pi_i}, \ell_i})_{i,\pi_i}.
\]
The set of all decorated $2$-permutations is denoted $\hat{S}^2_{n,k}$.
\end{defn}

Since there are $n!$ permutations and all labels are independently chosen, the cardinality of $\hat{S}^2_{n,k}$ is $n!^2 k^n$.

\begin{defn}
A \emph{decorated $2$-path} of length $n$ is a tuple $\hat{q} = (q_1,\dots,q_{n+1})$, where each $q_i \in [n]$. 
For $i \in [n]$, the $i$'th labeled edge is denoted $\hat{q}_i = \tbedge{q_i}{ q_{i+1}}{\alpha_i,  \ell_i}$
and has weight $(B_{\alpha_i, \ell_i})_{q_i,q_{i+1}}$, where $\alpha \in S_n$ and the \emph{label} $\ell_i \in [k]$.
The \emph{weight} of the decorated $2$-path is
\[
\wgt(\hat{q}) = \prod_{i=1}^n (B_{\alpha_{i}, \ell_i})_{q_i, q_{i+1}}.
\]
The set of all decorated $2$-paths is denoted $\widehat{Q}^2_{n,k}$.
\end{defn}

\begin{defn}
A \emph{$2$-pathmutation} is a pair $(\hat{\alpha}_\pi, \hat{q})$ where $\hat{\alpha}_\pi \in \hat{S}^2_{n,k}, \hat{q} \in \widehat{Q}^2_{n,k}$ such that the labels of the $i$'th element of the permutation and the $i$'th edge of the path are the same for all $i \in [n]$.
The \emph{signed weight} of a $2$-pathmutation is 
\[
\wgt(\hat{\alpha}_\pi, \hat{q}) = \swgt(\hat{\alpha}_\pi) \wgt(\hat{q}).
\]
The set of $2$-pathmutations beginning with $q_1 = b$ and ending with $q_{n+1} = e$ is denoted $\mathcal{A}^2(b,e)$.
\end{defn}

The cardinality of $\mathcal{A}^2(b,e)$ is $n!^2 k^n n^{n-1}$ for every $b,e \in [n]$ because we can choose $q_2,\dots,q_{n-1}$ arbitrarily.

\cref{fig:discriminant_PQ} shows a $2$-pathmutation $(\hat{\alpha}_\pi, \hat{q})$.

\begin{center}
  \begin{figure}[h!]
    \begin{tabular}{c c c c c c}
      $\hat{\alpha}_\pi$: & \taedge{1}{\pi_1}{ \alpha_{\pi_1},\ell_1} & $\cdots$ & \taedge{s}{b}{\alpha_{b},\ell_s} & $\cdots$ & \taedge{n}{ \pi_n}{\alpha_{\pi_n},\ell_n}\\
      $\hat{q}$: & \tbedge{b}{q_2}{\alpha_{\pi_1},\ell_1} & $\cdots$ & \tbedge{q_s}{ q_{s+1}}{\alpha_{b},\ell_s} & $\cdots$ & \tbedge{q_n}{e}{\alpha_{\pi_n},\ell_n}
    \end{tabular}
    \caption{A $2$-pathmutation $(\hat{\alpha}_\pi, \hat{q}) \in \mathcal{A}^2(b,e)$ where $\pi_s = b$.}
    \label{fig:discriminant_PQ}
  \end{figure}
\end{center}

\begin{defn}
A \emph{decorated $2$-map} $\widehat{m}$ is an $n$-tuple of quadruples
$\widehat{m}_i = (\sigma_i, \tau_i, \alpha_i, \ell_i)$, where $\sigma \in S_n$ is either the identity or a single transposition, $\alpha \in S_n$, $\tau_i \in [n]$ and $\ell_i \in [k]$ for all $i$ such that
\begin{itemize}
\item $\# \{\tau_1,\dots, \tau_n \} \geq n-1$,
\item if $\# \{\tau_1,\dots, \tau_n \} = n$, then $\sigma$ is the identity permutation,
\item if $\tau_i = \tau_j$ for some $(i,j)$, then either 
$\sigma_i = i, \sigma_j = j$ or $\sigma_i = j, \sigma_j = i$.
\end{itemize}
The \emph{weight} of $\widehat{m}_i$ is $(A_{\alpha_i,\ell_i})_{\sigma_i,\tau_i}$ and is denoted \taedge{\sigma_i}{\tau_i}{\alpha_i,\ell_i}. The weight of $\widehat{m}$ is then
\[
\wgt(\widehat{m}) = \prod_{i=1}^n (A_{\alpha_i,\ell_i})_{\sigma_i,\tau_i}.
\]
The set of all decorated $2$-maps is denoted $\widehat{M}^2_{n,k}$.
\end{defn}

When $\# \{\tau_1,\dots, \tau_n \} = n$, we get exactly decorated $2$-permutations. When $\# \{\tau_1,\dots, \tau_n \} = n-1$, there are $n(n-1) \times n!/2$ possibilities for $\tau$ and $2$ possibilities for $\sigma$ so that the cardinality of $\widehat{M}^2_{n,k}$ is
\[
k^n (n! + n(n-1) n!) = n!^2 k^n (n^2 - n + 1).
\]

\begin{defn}
A \emph{$2$-pathmap} is a pair $(\widehat{m},\hat{q})$ where $\widehat{m} \in \widehat{M}^2_{n,k}, \hat{q} \in \widehat{Q}^2_{n,k}$ such that 

\begin{itemize}

\item If $\{\tau_1,\dots,\tau_n\} = [n]$, then $(\widehat{m},\hat{q})$ is a $2$-pathmutation.

\item If $\# \{\tau_1,\dots, \tau_n \} = n-1$ and $\tau_i = \tau_j$ for some $i \neq j$, then $q_1 = \tau_i$. In this case, the labels of $\widehat{m}_k$ and $\hat{q}_k$ must match for all $k \neq i,j$. In addition, if $\sigma_i = i, \sigma_j = j$ (resp. $\sigma_i = j, \sigma_j = i$), then the labels of $\widehat{m}_i$ and $\widehat{m}_j$ are equal to those of $\hat{q}_i, \hat{q}_j$ (resp. $\hat{q}_j, \hat{q}_i$) respectively.

\end{itemize}

The weight of the $2$-pathmap $(\widehat{m},\hat{q})$ is
\[
\wgt(\widehat{m},\hat{q}) = \wgt(\widehat{m}) \wgt(\hat{q}).
\]

The set of $2$-pathmaps with $\# \{\tau_1,\dots, \tau_n \} = n-1$ such that $\{\tau_1,\dots, \tau_n \} = [n] \setminus \{b\}$ and ending with $q_{n+1} = e$ is denoted $\mathcal{H}^2(b,e)$. 
In addition, let $\mathcal{G}^2(b,e) = \mathcal{H}^2(b,e) \cup \mathcal{A}^2(b,e)$.
\end{defn}

Analogous to the enumeration of pathmaps, the cardinality of $\mathcal{H}^2(b,e)$ is again $n-1$ times that of $\mathcal{A}^2(b,e)$.
As in the proof of Phillips' theorem, we will need to attach a sign to a $2$-pathmap in $\mathcal{G}^2(b,e)$.
As before, define a map $\hat\phi: \mathcal{A}^2(b,e) \times [n] \to \mathcal{G}^2(b,e)$.
Set $\hat\phi((\hat{\alpha}_\pi, \hat{q}), j) = (\widehat{m}',\hat{q}')$  as follows. 
First, set 
\[
\hat{q}'_r = \begin{cases}
\tbedge{i'}{q_2}{\alpha_{\pi_1},\ell_1} & r = 1, \\
\hat{q}_r & \text{otherwise}.
\end{cases}
\]
Next, set $s = \pi^{-1}_b$ and $t = \pi^{-1}_j$.
Then let
\begin{equation}
\label{defphihat}
\widehat{m}'_r = 
\begin{cases}
\taedge{{\min(s,t)}}{j}{\alpha_{\pi_{\min(s,t)}},\ell_{\min(s,t)}} & \text{if $r = s$}, \\
\taedge{{\max(s,t)}}{j}{\alpha_{\pi_{\max(s,t)}},\ell_{\max(s,t)}} & \text{if $r = t$},\\
(\hat{\alpha}_\pi)_r & \text{otherwise}.
\end{cases}
\end{equation}

The sign of an element $(\widehat{m},\hat{q}) \in \mathcal{G}^2(b,e)$ can be defined in the same way as before. If $\hat\phi^{-1}(\widehat{m},\hat{q}) = ((\hat{\alpha}_\pi, \hat{q}), j)$, then
the signed weight of $(\widehat{m},\hat{q})$ is given by
\begin{equation}
\label{sgn-bipathmap}
\swgt(\widehat{m},\hat{q}) = \sgn(\pi) \wgt(\widehat{m},\hat{q}).
\end{equation}

\begin{proof}[Proof of \cref{thm:mixeddisc}]

We first claim that
\begin{equation}
\label{A2-sum}
\sum_{(\hat{\alpha}_\pi,\hat{q}) \in \mathcal{A}^2(b,e)} \swgt(\hat{\alpha}_\pi,\hat{q}) = \hat{p}\left((B_{i,j})_{i \in [n], j \in [k]} \right)_{b,e}.
\end{equation}
To see this, begin by expanding the polynomial $q$ in \eqref{defq}  as
\begin{multline*}
 \hat{p} \left((x_{i,j})_{i \in [n], j \in [k]} \right) = \frac{1}{n!}\sum_{\alpha \in S_n} \sum_{\pi \in S_n} \sgn(\pi) \\
 \times \prod_{i=1}^n \Big(
(A_{\alpha_{\pi_i}, 1})_{i, \pi_i} x_{\alpha_{\pi_i}, 1} + \cdots +
(A_{\alpha_{\pi_i}, k})_{i, \pi_i} x_{\alpha_{\pi_i}, k}
\Big).
\end{multline*}
Now, substitute $x_{i,j}$ by $B_{i,j}$ and use the fact that $B_{i,j}$'s commute pairwise to obtain
\begin{multline*}
\hat{p} \left((B_{i,j})_{i \in [n], j \in [k]} \right) =
\frac{1}{n!}\sum_{\alpha \in S_n} \sum_{\pi \in S_n} \sgn(\pi) \\
\times \prod_{i=1}^n \Big(
(A_{\alpha_{\pi_i}, 1})_{i, \pi_i} B_{\alpha_{\pi_i}, 1} + \cdots +
(A_{\alpha_{\pi_i}, k})_{i, \pi_i} B_{\alpha_{\pi_i}, k}
\Big),
\end{multline*}
which now simplifies to
\[
\hat{p} \left((B_{i,j})_{i \in [n], j \in [k]} \right) = \frac{1}{n!}\sum_{\alpha \in S_n} \sum_{\pi \in S_n} \sgn(\pi) \sum_{(\ell_1, \dots, \ell_n) \in [k]^n} \prod_{i=1}^n (A_{\alpha_{\pi_i}, \ell_i})_{i, \pi_i}B_{\alpha_{\pi_i}, \ell_i}.
\]
Now consider the $(b,e)$'th entry of this sum.
For each pair of permutations $\alpha,\pi$ and each element $\ell = (\ell_1, \dots, \ell_n) \in [k]^n$, we can represent the product of $A_{\alpha_{\pi_i}, \ell_i}$ over $i$ as the weight of the decorated $2$-permutation $\hat{\alpha}_\pi$, the label of whose $i$'th element is $(\alpha_{\pi_i},\ell_i)$ as seen above.
Now expand the product of $B_{\alpha_{\pi_i},\ell_i}$'s on the right hand side.
The $(b,e)$'th entry is a sum of terms, each of which corresponds exactly to a decorated $2$-path with initial vertex $b$ and final vertex $e$.
The $i$'th edge in the decorated $2$-path has the same label, $(\alpha_{\pi_i},\ell_i)$. Therefore, each term corresponds to a $2$-pathmutation, whose weight is equal to the term. This proves the claim above.

We now have the analogues of \cref{lem:Gij} and \cref{lem:Hij}. 
\begin{align}
\label{G2ij}
\sum_{(\widehat{m},\hat{q}) \in \mathcal{G}^2(b,e)} \swgt(\widehat{m},\hat{q}) =& 0, \\
\label{H2ij}
\sum_{(\widehat{m},\hat{q}) \in \mathcal{H}^2(b,e)} \swgt(\widehat{m},\hat{q}) =& 0.
\end{align}
The proofs of these equations proceed in essentially the same manner as the above lemmas and we omit them. By definition, the left hand side of \eqref{A2-sum} is the difference of the left hand sides of \eqref{G2ij} and \eqref{H2ij}, proving the result.
\end{proof}

\subsection{Illustration for $n=2$}
\label{sec:mixed-n=2}

We illustrate the ideas in the proof of \eqref{G2ij} and \eqref{H2ij}, which are key to the proof of \cref{thm:mixeddisc}, for $n = k = 2$. As in \cref{sec:n=2}, we will look at $b = 1$ and $e = 2$ in detail and keep the labels $\ell_1 = r$ and $\ell_2 = s$ in addition to the permutation $\alpha$ arbitrary. We then have $4$ $2$-pathmutations, which are shown in the left columns of \cref{fig:gen-eg-n=2ab,fig:gen-eg-n=2cd,fig:gen-eg-n=2efgh}. 
Similarly, there are $4$ such $2$-pathmaps in $\mathcal{H}^2(1,2)$, which are shown in the right columns of \cref{fig:gen-eg-n=2ab,fig:gen-eg-n=2cd,fig:gen-eg-n=2efgh}.

\begin{center}
\begin{figure}[h!]
\begin{tabular}{c c}
      \resizebox{0.45\textwidth}{!}{
    \begin{tabular}{c c c}
        $\hat{\alpha}_{(1,2)}$: & \taedge{1}{1}{\alpha_1,r} & \taedge{2}{2}{\alpha_2,s}\\[0.2cm]
        $\hat{q}$: & \tbedge{1}{1}{\alpha_1,r} & \tbedge{1}{2}{\alpha_2,s}
    \end{tabular}}
&
      \resizebox{0.45\textwidth}{!}{
    \begin{tabular}{c c c}
        $\widehat{m}$: & \taedge{1}{{2}}{\alpha_1,r} & \taedge{2}{2}{\alpha_2,s}\\[0.2cm]
        $\hat{q}$: & \tbedge{{2}}{1}{\alpha_1,r} & \tbedge{1}{2}{\alpha_2,s}
    \end{tabular}}
\\
    (a) {\tiny $+(A_{\alpha_1,r})_{1,1}(A_{\alpha_2,s})_{2,2}(B_{\alpha_1,r})_{1,1}(B_{\alpha_2,s})_{1,2}$}
&
    (b) {\tiny $+(A_{\alpha_1,r})_{1,2}(A_{\alpha_2,s})_{2,2}(B_{\alpha_1,r})_{2,1}(B_{\alpha_2,s})_{1,2}$}
\end{tabular}
\caption{The $2$-pathmap terms proportional to $(A_{\alpha_2,s})_{2,2}(B_{\alpha_2,s})_{1,2}$ along with their signed weights.}
\label{fig:gen-eg-n=2ab}
\end{figure}
\end{center}

\begin{center}
\begin{figure}[h!]
\begin{tabular}{c c}
      \resizebox{0.45\textwidth}{!}{
    \begin{tabular}{c c c}
        $\hat{\alpha}_{(1,2)}$: & \taedge{1}{1}{\alpha_1,r} & \taedge{2}{2}{\alpha_2,s}\\[0.2cm]
        $\hat{q}$: & \tbedge{1}{2}{\alpha_1,r} & \tbedge{2}{2}{\alpha_2,s}
    \end{tabular}}
&
      \resizebox{0.45\textwidth}{!}{
    \begin{tabular}{c c c}
        $\widehat{m}$: & \taedge{1}{{2}}{\alpha_1,r} & \taedge{2}{2}{\alpha_2,s}\\[0.2cm]
        $\hat{q}$: & \tbedge{{2}}{2}{\alpha_1,r} & \tbedge{2}{2}{\alpha_2,s}
    \end{tabular}}
\\
    (c) {\tiny $+(A_{\alpha_1,r})_{1,1}(A_{\alpha_2,s})_{2,2}(B_{\alpha_1,r})_{1,2}(B_{\alpha_2,s})_{2,2}$}
&
    (d) {\tiny $+(A_{\alpha_1,r})_{1,2}(A_{\alpha_2,s})_{2,2}(B_{\alpha_1,r})_{2,2}(B_{\alpha_2,s})_{2,2}$}
\end{tabular}
\caption{The $2$-pathmap terms proportional to $(A_{\alpha_2,s})_{2,2}(B_{\alpha_2,s})_{2,2}$ along with their signed weights.}
\label{fig:gen-eg-n=2cd}
\end{figure}
\end{center}

Let us compare the $2$-pathmutation in \cref{fig:gen-eg-n=2ab}(a) and the $2$-pathmap in \cref{fig:gen-eg-n=2ab}(b).
One can check that the latter has positive sign.
Now, the sum of weights of these are
\begin{align*}
& \sum_{\alpha \in S_2} \sum_{r=1}^2 \sum_{s=1}^2 (A_{\alpha_2,s})_{2,2}(B_{\alpha_2,s})_{1,2} \Big(
(A_{\alpha_1,r})_{1,1}(B_{\alpha_1,r})_{1,1} + (A_{\alpha_1,r})_{1,2}(B_{\alpha_1,r})_{2,1} \Big) \\
=& \sum_{\alpha_1 = 1}^2 \sum_{s=1}^2 (A_{3-\alpha_1,s})_{2,2}(B_{3-\alpha_1,s})_{1,2} 
\sum_{r=1}^2 (A_{\alpha_1,r} B_{\alpha_1,r})_{1,1}.
\end{align*}
which is zero by \eqref{aijbijsum}. 
A very similar computation goes through for 
the terms in \cref{fig:gen-eg-n=2cd}(c) and (d) and gives
\begin{align*}
\sum_{\alpha_1 = 1}^2 \sum_{s=1}^2 (A_{3-\alpha_1,s})_{2,2}(B_{3-\alpha_1,s})_{2,2} 
\sum_{r=1}^2 (A_{\alpha_1,r} B_{\alpha_1,r})_{1,2},
\end{align*}
which is also zero for the same reason.

\begin{center}
\begin{figure}[h!]
\begin{tabular}{c c}
      \resizebox{0.45\textwidth}{!}{
    \begin{tabular}{c c c}
        $\hat{\alpha}_{(2,1)}$: & \taedge{1}{2}{\alpha_2,r} & \taedge{2}{1}{\alpha_1,s}\\[0.2cm]
        $\hat{q}$: & \tbedge{1}{1}{\alpha_2,r} & \tbedge{1}{2}{\alpha_1,s}
    \end{tabular}}
&
      \resizebox{0.45\textwidth}{!}{
    \begin{tabular}{c c c}
        $\widehat{m}$: & \taedge{2}{{2}}{\alpha_1,s} & \taedge{1}{2}{\alpha_2,r}\\[0.2cm]
        $\hat{q}$: & \tbedge{{2}}{1}{\alpha_2,r} & \tbedge{1}{2}{\alpha_1,s}
    \end{tabular}}
\\
    (e) {\tiny $-(A_{\alpha_2,r})_{1,2}(A_{\alpha_1,s})_{2,1}(B_{\alpha_2,r})_{1,1}(B_{\alpha_1,s})_{1,2}$}
&
    (f) {\tiny $-(A_{\alpha_2,r})_{1,2}(A_{\alpha_1,s})_{2,2}(B_{\alpha_2,r})_{2,1}(B_{\alpha_1,s})_{1,2}$}
\\[0.4cm]
      \resizebox{0.45\textwidth}{!}{
    \begin{tabular}{c c c}
        $\hat{\alpha}_{(1,2)}$: & \taedge{1}{2}{\alpha_2,r} & \taedge{2}{1}{\alpha_1,s}\\[0.2cm]
        $\hat{q}$: & \tbedge{1}{2}{\alpha_2,r} & \tbedge{2}{2}{\alpha_1,s}
    \end{tabular}}
&
      \resizebox{0.45\textwidth}{!}{
    \begin{tabular}{c c c}
        $\widehat{m}$: & \taedge{2}{{2}}{\alpha_1,s} & \taedge{1}{2}{\alpha_2,r}\\[0.2cm]
        $\hat{q}$: & \tbedge{{2}}{2}{\alpha_2,r} & \tbedge{2}{2}{\alpha_1,s}
    \end{tabular}}
\\
    (g) {\tiny $-(A_{\alpha_2,r})_{1,2}(A_{\alpha_1,s})_{2,1}(B_{\alpha_2,r})_{1,2}(B_{\alpha_1,s})_{2,2}$}
&
    (h) {\tiny $-(A_{\alpha_2,r})_{1,2}(A_{\alpha_1,s})_{2,2}(B_{\alpha_2,r})_{2,2}(B_{\alpha_1,s})_{2,2}$}
\end{tabular}

  \caption{The $2$-pathmap terms proportional to $(A_{\alpha_2,r})_{1,2}$ along with their signed weights.}
  \label{fig:gen-eg-n=2efgh}
\end{figure}
\end{center}

Complications arise in the remaining terms shown in \cref{fig:gen-eg-n=2efgh}(e), (f), (g) and (h). The sign for the terms in (g) and (h) are computed as described above. In this case, combining terms (e) and (g), we get
\begin{align*}
& -\sum_{\alpha \in S_2} \sum_{r=1}^2 \sum_{s=1}^2 (A_{\alpha_2,r})_{1,2}(A_{\alpha_1,s})_{2,1} \Big(
(B_{\alpha_2,r})_{1,1}(B_{\alpha_1,s})_{1,2} + (B_{\alpha_2,r})_{1,2}(B_{\alpha_1,s})_{2,2} \Big) \\
=& -\sum_{\alpha \in S_2} \sum_{r=1}^2 \sum_{s=1}^2 (A_{\alpha_2,r})_{1,2}(A_{\alpha_1,s})_{2,1} \sum_{r=1}^2 (B_{\alpha_2,r} B_{\alpha_1,s})_{1,2} \\
=& -\sum_{\alpha \in S_2} \sum_{r=1}^2 \sum_{s=1}^2 (A_{\alpha_2,r})_{1,2}(A_{\alpha_1,s})_{2,1} \sum_{r=1}^2 (B_{\alpha_1,s} B_{\alpha_2,r})_{1,2} \\
=& -\sum_{\alpha \in S_2} \sum_{r=1}^2 \sum_{s=1}^2 (A_{\alpha_2,r})_{1,2}(A_{\alpha_1,s})_{2,1} \Big(
(B_{\alpha_1,s})_{1,1}(B_{\alpha_2,r})_{1,2} + (B_{\alpha_1,s})_{1,2}(B_{\alpha_2,r})_{2,2} \Big),
\end{align*}
where we have used the commutativity of $B_{\alpha_2,r}$ and $B_{\alpha_1,s}$ in the third line. Similarly, combining terms (f) and (h), we get
\begin{align*}
\sum_{\alpha \in S_2} \sum_{r=1}^2 \sum_{s=1}^2 (A_{\alpha_2,r})_{1,2}(A_{\alpha_1,s})_{2,2} \Big(
(B_{\alpha_2,r})_{2,1}(B_\alpha)_{1,2} + (B_{\alpha_1,s})_{2,2}(B_{\alpha_2,r})_{2,2} \Big).
\end{align*}
Now, add the first summands in both the above equations to obtain
\begin{align*}
&  -\sum_{\alpha \in S_2} \sum_{r=1}^2 \sum_{s=1}^2 (A_{\alpha_2,r})_{1,2} (B_{\alpha_2,r})_{1,2} \Big(
(A_{\alpha_1,s})_{2,1} (B_{\alpha_1,s})_{1,1} + (A_{\alpha_1,s})_{2,2}(B_\beta)_{2,1} \Big) \\
=& -\sum_{\alpha_1 = 1}^2 \sum_{r=1}^2(A_{3-\alpha_1,r})_{1,2} (B_{3-\alpha_1,r})_{1,2} \sum_{s=1}^2 (A_{\alpha_1,s} B_{\alpha_1,s})_{2,1},
\end{align*}
which is now 0 by \eqref{aijbijsum}. A similar computation goes through for the second and fourth summands. 
This kind of computation is what needs to carried out to prove \eqref{G2ij}.

Now focus on the $2$-pathmap terms in \cref{fig:gen-eg-n=2ab,fig:gen-eg-n=2cd,fig:gen-eg-n=2efgh}, namely (b), (d), (f) and (h). 
Focus on the (b) and (f) figures. If we interchange $\alpha_1$ and $\alpha_2$ in the weight of the (b) figure, we obtain the negative of the weight of the (f) figure. Similarly, for (d) and (h) terms. Thus, an involution of the same kind used in the proof of \cref{lem:Hij} in addition to an appropriate involution on $\alpha$ will prove \eqref{H2ij}.

\subsection*{Acknowledgments} 
We thank R. B. Bapat for many helpful discussions.
We acknowledge support from the UGC Centre for Advanced Studies and from SERB grant CRG/2021/001592.

\bibliography{cayley_hamilton}
\bibliographystyle{alpha}

\end{document}